\documentclass[11pt]{article}
\usepackage[a4paper,margin=1in]{geometry}
\usepackage[T1]{fontenc}
\usepackage[utf8]{inputenc}
\usepackage{lmodern}
\usepackage{amsmath,amssymb,amsthm,mathtools}
\usepackage{enumitem}
\usepackage{microtype}
\usepackage{authblk}
\usepackage{xcolor}
\usepackage{hyperref}
\hypersetup{colorlinks=true,linkcolor=blue!55!black,citecolor=blue!55!black,urlcolor=blue!55!black,
pdftitle={Lieb--Thirring bounds for Melik--Adamyan canonical Hamiltonians},
pdfsubject={Spectral gap estimates for canonical Hamiltonians},
pdfauthor={Baruch Schneider, Diana Schneiderov\'a, Yifan Zhang},
pdfkeywords={canonical systems, Dirac operators, Lieb--Thirring inequalities, gauge optimization}}
\numberwithin{equation}{section}

\theoremstyle{plain}
\newtheorem{theorem}{Theorem}[section]
\newtheorem{proposition}[theorem]{Proposition}
\newtheorem{lemma}[theorem]{Lemma}
\newtheorem{corollary}[theorem]{Corollary}
\theoremstyle{definition}

\theoremstyle{remark}
\newtheorem{remark}[theorem]{Remark}

\newcommand{\C}{\mathbb C}
\newcommand{\R}{\mathbb R}
\newcommand{\HH}{\mathcal H}
\newcommand{\Tr}{\operatorname{Tr}}
\newcommand{\AC}{\mathrm{AC}}
\newcommand{\loc}{\mathrm{loc}}
\newcommand{\spec}{\sigma}

\newcommand{\Dom}{\operatorname{Dom}}

\newcommand{\Ker}{\operatorname{Ker}}

\title{Lieb--Thirring bounds for Melik--Adamyan canonical Hamiltonians}

\author[1]{Baruch Schneider\footnote{Corresponding author.}}
\author[1]{Diana Schneiderov\'a (Barseghyan)}
\author[1,2,3]{Yifan Zhang}
\affil[1]{Department of Mathematics, University of Ostrava,
702 00 Ostrava, Czechia}
\affil[2]{Department of Algebra, Charles University,
186 75 Prague, Czechia}
\affil[3]{Department of Applied Mathematics,
VSB -- Technical University of Ostrava, 708 00 Ostrava, Czechia}
\date{\small\textit{E-mail addresses:}
\href{mailto:baruch.schneider@osu.cz}{\texttt{baruch.schneider@osu.cz}},
\href{mailto:diana.schneiderova@osu.cz}{\texttt{diana.schneiderova@osu.cz}}, and
\href{mailto:yifan.zhang@osu.cz}{\texttt{yifan.zhang@osu.cz}}}

\begin{document}
\maketitle

\begin{abstract}
We study a class of positive matrix Hamiltonians arising from the canonical
differential expressions of Melik--Adamyan and appearing in the appendix of
Alpay--Gohberg.  Let $J$ and $B$ be self-adjoint involutions on $\C^{2n}$
satisfying $JB=-BJ$, and let $\HH>0$ satisfy $\HH J\HH=J$.  For $m>0$ we
consider
\[
 \mathcal A_{m,\HH}=\HH^{-1}\left(-iJ\frac d{dt}+mB\right)
\]
in the weighted space $L^2_\HH$.  A locally absolutely continuous
$J$-unitary gauge $\Theta$ representing $\HH$ reduces this expression to the
free massive Dirac operator plus the Hermitian coefficient
\[
 P_{m,\Theta}=-i\Theta^*J\Theta'+m(\Theta^*B\Theta-B).
\]
Whenever this coefficient belongs to $L^2$, the corresponding self-adjoint
realization, including its operator domain, is independent of the chosen
representing gauge.  Minimizing $\int\Tr|P_{m,\Theta}|^2$ over the gauge fibre
defines an intrinsic energy.  A two-sided Birman--Schwinger decoupling,
combined with a truncated pseudo-relativistic estimate proved here, gives a
$3/2$-moment bound for all eigenvalues in the gap $(-m,m)$ in terms of this
energy.  The Dirac estimate applies to arbitrary Hermitian matrix
coefficients in $L^2$ and requires no sign condition.  On the half-line we
treat every self-adjoint Lagrangian boundary condition.  Two
reflection-compatible conditions require no endpoint correction, while an
arbitrary condition contributes at most $2nm^{3/2}$.  At zero mass, the
optimal-gauge energy is computed explicitly in terms of
$\HH^{-1/2}\HH'\HH^{-1/2}$.  For a scalar hyperbolic-rotation family the
massive gauge minimization reduces exactly to a one-dimensional phase
functional.  We prove existence of a minimizer in the principal phase sector
and give an explicit trial phase that strictly and quantitatively improves the
positive lift whenever the corresponding first variation is nonzero.
\end{abstract}

\medskip
\noindent\textbf{2020 Mathematics Subject Classification.}
Primary 34L15, 47A75; Secondary 34B20, 34L40, 47B25, 81Q10.

\noindent\textbf{Keywords.}
Canonical systems, Dirac operators, Lieb--Thirring inequalities, spectral
gaps, gauge optimization, self-adjoint boundary conditions.

\section{Introduction}
Canonical systems provide a common language for one-dimensional spectral
problems.  The differential expressions introduced by Melik--Adamyan in
\cite{Melik1977,Melik1989}, and later used in the appendix of
Alpay--Gohberg \cite{AlpayGohberg1995}, are naturally described by a positive
Hamiltonian
\begin{equation}\label{eq:intro-H}
       \HH=\Theta^{-*}\Theta^{-1},
\end{equation}
where $\Theta$ is a continuous $J$-unitary matrix function.  The historical
formulation only requires continuity.  We work with locally absolutely
continuous Hamiltonians and gauges because the coefficient functional studied
below contains one derivative.

Whenever an admissible massless reduced coefficient belongs to $L^2$, the
corresponding reference operator has essential spectrum equal to the whole
real line and therefore has no finite spectral gap.  We introduce the massive
deformation
\begin{equation}\label{eq:intro-A}
 \mathcal A_{m,\HH}
 =\HH^{-1}\left(-iJ\frac d{dt}+mB\right),\qquad m>0,
\end{equation}
where $B$ anticommutes with $J$.  In every admissible gauge this expression is
unitarily equivalent to a massive Dirac operator with an $L^2$ matrix
coefficient, and its essential spectrum is
$(-\infty,-m]\cup[m,\infty)$.

The main analytic input is a matrix-valued gap estimate.  For a Hermitian
coefficient $Q\in L^2(\R;\C^{2n\times2n})$ we prove
\[
 \sum_{\lambda\in\spec_{\rm d}(D_m+Q)\cap(-m,m)}
       (m-|\lambda|)^{3/2}
 \le C_n m^{1/2}\int_\R\Tr|Q(t)|^2\,dt.
\]
The proof uses the two-sided gap decoupling of Frank--Simon
\cite[Theorem~1.4 and Section~7]{FrankSimon2011}.  Its remaining ingredient is
a truncated bound for
$\sqrt{-d^2/dt^2+m^2}-m-v$.  Unlike the untruncated pseudo-relativistic
estimate used in \cite[Theorem~7.1]{FrankSimon2011}, the truncated moment only
sees eigenvalue depths up to $m$ and is controlled by the single norm
$\|v\|_2$.  This removes the additional $L^{5/2}$ assumption at the exponent
$3/2$.

The structural part of the paper transfers this Dirac estimate to the
Hamiltonian class.  The transfer is not merely formal: we identify the
self-adjoint operator domain, prove independence of the representing gauge,
and retain arbitrary Lagrangian boundary conditions on the half-line.  The
resulting intrinsic coefficient energy is
\begin{equation}\label{eq:intro-energy}
 \mathfrak E_m(\HH)
 =\inf_{\Theta:\,\Theta^{-*}\Theta^{-1}=\HH}
   \int_\R\Tr\left|-i\Theta^*J\Theta'
      +m(\Theta^*B\Theta-B)\right|^2dt,
\end{equation}
where the infimum is taken over gauges for which the displayed coefficient
belongs to $L^2$.  The main full-line and half-line estimates are
Theorems~\ref{thm:main-full} and~\ref{thm:main-half}, respectively.

\subsection*{Relation to previous results}
Three nearby results should be distinguished from the present one.

First, Frank--Simon proved a nearest-gap-edge Lieb--Thirring estimate for a
scalar potential in the one-dimensional $2\times2$ Dirac operator
\cite[Theorem~7.1]{FrankSimon2011}.  At moment exponent $3/2$ their stated
bound involves both $\int |V|^2$ and $\int |V|^{5/2}$.  Our truncated
pseudo-relativistic lemma yields a pure $L^2$ estimate and the argument is
formulated for arbitrary Hermitian $2n\times2n$ coefficients.

Second, Dolbeault, Gontier, Pizzichillo, and Van Den Bosch obtained sharp
Keller estimates and Lieb--Thirring bounds for sign-definite scalar Dirac
potentials; their moment is measured from the upper edge of the gap
\cite[Theorem~1.4]{DolbeaultEtAl2024}.  Our estimate instead controls distance
to the nearer edge, allows both signs and noncommuting matrix coefficients,
and is not an optimizer theorem.

Third, Eckhardt and Kostenko derived sharp $3/2$-moment gap inequalities for
scalar generalized indefinite strings and translated their spectral
characterizations to trace-normalized $2\times2$ canonical systems
\cite[Corollary~10.2, Theorem~14.4, and Appendix~D]{EckhardtKostenko2024}.
Their right-hand side is a weighted scalar coefficient functional obtained
from trace formulas.  The present paper treats the $2n\times2n$
Melik--Adamyan class, uses an unweighted $L^2$ norm of a massive matrix gauge
coefficient, optimizes over the full $J$-unitary gauge fibre, and includes
arbitrary Lagrangian endpoint conditions.  The two inequalities are therefore
complementary rather than competing sharp forms of the same statement.

General variational principles for eigenvalues arising from both ends of a gap
were developed in \cite{DolbeaultEstebanSere2006}.  Recent weak-coupling
asymptotics for one-dimensional massive Dirac operators with Hermitian matrix
potentials were obtained in \cite{AldunateEtAl2026}; those results concern
threshold expansions rather than global eigenvalue moments.
Canonical-system sum rules and distinguished gauges have also been studied in
\cite{BessonovDenisov2023,DamanikEichingerYuditskii2021}.  Our use of gauge
freedom is different: it minimizes a local matrix coefficient energy attached
to a fixed Hamiltonian.

\subsection*{Organization}
Section~2 identifies the gauge fibre and computes the massless energy.
Section~3 constructs the self-adjoint realizations and proves gauge
independence.  Section~4 establishes the full-line matrix Dirac estimate and
then the intrinsic Hamiltonian bound.  Section~5 treats half-line boundary
conditions.  Sections~6 and 7 give the hyperbolic example and the scalar phase
reduction.  The final section records several remaining questions.

\section{Hamiltonians, gauges, and the massless energy}
Throughout the paper, $J,B\in\C^{2n\times2n}$ satisfy
\begin{equation}\label{eq:JB}
 J=J^*,\quad B=B^*,\quad J^2=B^2=I,\quad JB=-BJ.
\end{equation}
The anticommutation implies that the $\pm1$ eigenspaces of $J$ both have
dimension $n$.  After a fixed unitary change of basis one may assume
\begin{equation}\label{eq:standard-JB}
 J=\begin{pmatrix}I_n&0\\0&-I_n\end{pmatrix},\qquad
 B=\begin{pmatrix}0&I_n\\I_n&0\end{pmatrix}.
\end{equation}
We do not otherwise use this normalization except in the boundary discussion
and the example.

A matrix function $\Theta$ is called $J$-unitary if
\begin{equation}\label{eq:Junitary}
       \Theta^*J\Theta=J.
\end{equation}
For such an invertible matrix, set
\begin{equation}\label{eq:HfromTheta}
       \HH=\Theta^{-*}\Theta^{-1}.
\end{equation}
Then
\begin{equation}\label{eq:Hsymplectic}
       \HH>0,\qquad \HH J\HH=J,
       \qquad J\HH J=\HH^{-1}.
\end{equation}
Conversely, every locally absolutely continuous positive Hamiltonian satisfying
\eqref{eq:Hsymplectic} has a locally absolutely continuous $J$-unitary
representative.

\begin{lemma}[Positive lift]\label{lem:positive-lift}
Let $\HH\in\AC_{\loc}(I)$ be positive definite and satisfy
$\HH J\HH=J$.  Then
\begin{equation}\label{eq:Rdef}
       R=\HH^{-1/2}
\end{equation}
is locally absolutely continuous, satisfies $RJR=J$, and represents $\HH$:
$\HH=R^{-*}R^{-1}$.
\end{lemma}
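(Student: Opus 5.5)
The plan has three parts: show that $R$ is locally absolutely continuous, show that it represents $\HH$, and then show it is $J$-unitary; the last is the only nontrivial step.

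\emph{Absolute continuity.} On any compact subinterval $K\subset I$, continuity and positivity of $\HH$ give constants $0<a\le b$ with $aI\le\HH(t)\le bI$ for $t\in K$. On $[a,b]$ the function $x\mapsto x^{-1/2}$ is smooth. Hence the matrix map $F(X)=X^{-1/2}$ is real-analytic, in particular locally Lipschitz, on the compact convex set $\{X=X^*:\ aI\le X\le bI\}$. A concrete way to see this is the Cauchy integral $X^{-1/2}=\frac1{2\pi i}\oint_\Gamma z^{-1/2}(z-X)^{-1}\,dz$, with $\Gamma$ a contour around $[a,b]$ in the right half-plane. Then $\|R(t)-R(s)\|\le L\|\HH(t)-\HH(s)\|$ on $K$, and a Lipschitz function of an absolutely continuous function is absolutely continuous.

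\emph{Representation.} Since $R=R^*$ and $R$ commutes with $\HH$, we get $R^{-*}R^{-1}=R^{-2}=(\HH^{-1/2})^{-2}=\HH$.

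\emph{The identity $RJR=J$.} The relation $\HH J\HH=J$ rewrites as $J\HH J=\HH^{-1}$. Because $J=J^*=J^{-1}$, the map $X\mapsto JXJ$ is conjugation by a unitary. It therefore commutes with the continuous functional calculus: $Jf(\HH)J=f(J\HH J)$ for every continuous $f$ on the spectrum. Taking $f(x)=x^{-1/2}$ gives
\[
 JRJ=(J\HH J)^{-1/2}=(\HH^{-1})^{-1/2}=\HH^{1/2}=R^{-1}.
\]
Multiplying on the left by $R$ and then on the right by $J$ yields $RJR=J$, so $R$ is $J$-unitary, as required in \eqref{eq:Junitary}.

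The main care point is exactly this step. One has to use that positive square roots are unique and transform covariantly under unitary conjugation, rather than attempting a direct algebraic manipulation of $\HH J\HH=J$, which does not obviously pass to square roots. The functional-calculus argument above handles it cleanly, and the rest is routine.
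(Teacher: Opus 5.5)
Your proposal is correct and follows the paper's argument: conjugation by the unitary involution $J$ commutes with the functional calculus, so $J\HH^{-1/2}J=(\HH^{-1})^{-1/2}=\HH^{1/2}$, which rearranges to $RJR=J$. Local absolute continuity comes, as in the paper, from the smooth matrix functional calculus on compact intervals where $\HH$ is uniformly positive; you simply spell out the details (the Cauchy-integral Lipschitz bound and the explicit algebra) that the paper's proof leaves implicit.
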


\begin{proof}
The identity $J\HH J=\HH^{-1}$ and functional calculus give
$J\HH^{-1/2}J=\HH^{1/2}$.  Hence
$\HH^{-1/2}J\HH^{-1/2}=J$.  The representation formula is immediate.
Local absolute continuity follows from the smooth matrix functional calculus
on compact intervals on which $\HH$ and $\HH^{-1}$ are bounded.
\end{proof}

For a locally absolutely continuous gauge define
\begin{equation}\label{eq:Qtheta}
       Q_\Theta=-i\Theta^*J\Theta'.
\end{equation}
Differentiating \eqref{eq:Junitary} shows that $Q_\Theta$ is Hermitian, and
\begin{equation}\label{eq:thetaODE}
       \Theta'=\Theta iJQ_\Theta.
\end{equation}
Moreover,
\begin{equation}\label{eq:Hprime}
 \HH'=i\Theta^{-*}(Q_\Theta J-JQ_\Theta)\Theta^{-1}.
\end{equation}
For any matrix $Q$ write
\begin{equation}\label{eq:parallel-perp}
 Q_{\parallel}=\frac12(Q+JQJ),\qquad
 Q_{\perp}=\frac12(Q-JQJ).
\end{equation}
Thus $Q_{\parallel}$ commutes with $J$ and $Q_{\perp}$ anticommutes with $J$.

\begin{proposition}[Horizontal gauge and massless energy]
\label{prop:horizontal}
Let $\HH\in\AC_{\loc}(I)$ satisfy \eqref{eq:Hsymplectic}.  Among all locally
absolutely continuous $J$-unitary gauges representing $\HH$,
\begin{equation}\label{eq:massless-energy}
 \inf_\Theta\int_I\Tr|Q_\Theta|^2dt
 =\frac14\int_I
   \Tr\left[\left(\HH^{-1/2}\HH'\HH^{-1/2}\right)^2\right]dt.
\end{equation}
The infimum is attained by gauges for which
$(Q_\Theta)_{\parallel}=0$.
\end{proposition}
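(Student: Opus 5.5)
The plan is to split $Q_\Theta$ into its $J$-commuting and $J$-anticommuting parts. The perpendicular part is determined by $\HH$ alone, while the parallel part is pure gauge freedom and can be set to zero.

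\emph{Step 1: the perpendicular part is fixed by $\HH$.} In \eqref{eq:Hprime} we have $Q_\Theta J-JQ_\Theta=(Q_\perp J - JQ_\perp)=2Q_\perp J$, since $Q_\parallel$ commutes with $J$. Hence
\[
\HH'=2i\,\Theta^{-*}Q_\perp J\Theta^{-1}.
\]
Write $\HH=\Theta^{-*}\Theta^{-1}$, so that $\Theta^{-1}=U\HH^{1/2}$ with $U=\HH^{-1/2}\Theta^{-*}{}^{*}$. More simply, $U:=\Theta^{-1}\HH^{-1/2}$ is unitary, because $U^*U=\HH^{-1/2}\Theta^{-*}\Theta^{-1}\HH^{-1/2}=I$. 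Then
\[
\HH^{-1/2}\HH'\HH^{-1/2}=2i\,U^*Q_\perp J U ,
\]
and therefore
\[
\Tr[(\HH^{-1/2}\HH'\HH^{-1/2})^2]=-4\Tr[(Q_\perp J)^2].
\]
Because $Q_\perp$ anticommutes with $J$ and $J^2=I$, we get $(Q_\perp J)^2=-Q_\perp^2$. Since $Q_\perp$ is Hermitian (it is the average of Hermitian matrices), $\Tr Q_\perp^2=\Tr|Q_\perp|^2$. This gives the identity
\[
\tfrac14\Tr[(\HH^{-1/2}\HH'\HH^{-1/2})^2]=\Tr|Q_\perp|^2
\]
pointwise a.e., for every admissible gauge.

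\emph{Step 2: the lower bound.} The decomposition \eqref{eq:parallel-perp} is orthogonal in the Hilbert--Schmidt inner product. Indeed, $\Tr(Q_\parallel Q_\perp)$ vanishes, since conjugating by $J$ fixes the trace while flipping the sign of the product. Hence $\Tr|Q_\Theta|^2=\Tr|Q_\parallel|^2+\Tr|Q_\perp|^2\ge \Tr|Q_\perp|^2$. Integrating and using Step 1 gives ``$\ge$'' in \eqref{eq:massless-energy}. Equality holds exactly when $Q_\parallel=0$ a.e.

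\emph{Step 3: attainment (the main point).} We must produce an $\AC_{\loc}$ $J$-unitary gauge representing $\HH$ with $(Q_\Theta)_\parallel=0$. Start from the positive lift $R$ of Lemma~\ref{lem:positive-lift}. The gauge fibre consists of $\Theta=RV$ where $V$ is both unitary and $J$-unitary, i.e.\ unitary and commuting with $J$. To see this, note that $\Theta^{-*}\Theta^{-1}=R^{-*}R^{-1}$ forces $R^{-1}\Theta$ to be unitary, and it is $J$-unitary as a product of $J$-unitaries. A unitary commuting with $J$ is also $J$-unitary, so it is compatible with both conditions.

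Computing,
\[
Q_{RV}=-iV^*R^*JR'V-iV^*R^*JRV' = V^*Q_RV - iV^*JV'.
\]
Since $V$ commutes with $J$, conjugation by $V$ preserves the $\parallel/\perp$ splitting, and $-iV^*JV'$ is of $\parallel$ type. So we solve the linear ODE
\[
V'=-iJV\,(V^*(Q_R)_\parallel V)=-iJ(Q_R)_\parallel V ,\qquad V(t_0)=I,
\]
using that $(Q_R)_\parallel$ commutes with $J$. Its coefficient $-iJ(Q_R)_\parallel$ is $L^1_{\loc}$, anti-Hermitian (because $J(Q_R)_\parallel$ is Hermitian) and commutes with $J$. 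Carathéodory theory therefore gives a unique $\AC_{\loc}$ solution that stays unitary and commutes with $J$.

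Finally we check that $(Q_{RV})_\parallel=V^*(Q_R)_\parallel V-iV^*JV'=0$. The only delicate points are local integrability of $Q_R$, which follows from $R\in\AC_{\loc}$, and the sign bookkeeping in this last identity.
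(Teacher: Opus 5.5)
Your proof is correct and follows the paper's argument. The perpendicular part is fixed by the $\HH'$ formula: you conjugate by the unitary $\Theta^{-1}\HH^{-1/2}$, where the paper uses $\Theta\Theta^*=\HH^{-1}$ and cyclicity of the trace. Hilbert--Schmidt orthogonality then gives the lower bound, and the same linear ODE $V'=-iJ(Q)_\parallel V$ produces the horizontal gauge, which you start from the positive lift rather than an arbitrary representative. Two cosmetic points: your first, garbled definition of $U$ should simply be dropped, and ``equality exactly when $Q_\parallel=0$'' should be read in the extended sense when the right-hand side is infinite.
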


\begin{proof}
If $\widetilde\Theta=\Theta U$, where $U$ is unitary and commutes with $J$,
then $\widetilde\Theta$ represents the same Hamiltonian and
\begin{equation}\label{eq:gauge-law}
 Q_{\widetilde\Theta}=U^*Q_\Theta U-iU^*JU'.
\end{equation}
Conversely, two gauges of the same Hamiltonian differ in this way.  Since $J(Q_\Theta)_{\parallel}$ is Hermitian and commutes with $J$,
the ODE
$U'=-iJ(Q_\Theta)_{\parallel}U$ has a unitary solution commuting with $J$
and removes the parallel component.  Formula
\eqref{eq:Hprime} gives
\[
       \Theta^*\HH'\Theta=-2iJ(Q_\Theta)_{\perp}.
\]
Taking the trace of the square and using $\Theta\Theta^*=\HH^{-1}$ yields
\[
 \Tr|(Q_\Theta)_{\perp}|^2
 =\frac14\Tr\left[\left(
      \HH^{-1/2}\HH'\HH^{-1/2}\right)^2\right].
\]
The two components in \eqref{eq:parallel-perp} are orthogonal in the
Hilbert--Schmidt inner product.  Thus the right-hand side of
\eqref{eq:massless-energy} is a gauge-independent lower bound, and the
horizontal gauge attains it, with equality understood in the extended sense
when either side is infinite.
\end{proof}

\section{Dirac realizations and gauge independence}
Let
\begin{equation}\label{eq:Dm}
       D_m=-iJ\frac d{dt}+mB
\end{equation}
be the free full-line operator on $H^1(\R;\C^{2n})$.  Its spectrum is
$(-\infty,-m]\cup[m,\infty)$.

\begin{proposition}[The $L^2$ matrix Dirac class]\label{prop:L2-Dirac}
If $Q=Q^*\in L^2(\R;\C^{2n\times2n})$, then $D_m+Q$ is self-adjoint on
$H^1(\R;\C^{2n})$.  Moreover,
\[
 \spec_{\rm ess}(D_m+Q)=(-\infty,-m]\cup[m,\infty),
\]
and the spectrum in $(-m,m)$ is discrete.
\end{proposition}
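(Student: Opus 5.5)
The plan is to show that multiplication by $Q$ is infinitesimally form-small, in fact relatively compact, with respect to $D_m$. Self-adjointness will then follow from Kato--Rellich, and the essential spectrum from Weyl's theorem.

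\textbf{Relative boundedness.} In one dimension the Sobolev embedding gives, for every $\varepsilon>0$,
\[
 \|u\|_\infty^2\le \varepsilon\|u'\|_2^2+\varepsilon^{-1}\|u\|_2^2 ,\qquad u\in H^1(\R;\C^{2n}).
\]
Consequently
\[
 \|Qu\|_2\le \|Q\|_{L^2}\|u\|_\infty
 \le \|Q\|_{L^2}\bigl(\sqrt\varepsilon\|u'\|_2+\varepsilon^{-1/2}\|u\|_2\bigr).
\]
The matrix norm is dominated by the Hilbert--Schmidt norm, so $\|Q\|_{L^2}^2\le\int\Tr|Q|^2$. Since $J$ is unitary, $\|u'\|_2=\|JD_0u\|_2$. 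Moreover $D_m^2=-d^2/dt^2+m^2$, because $JB+BJ=0$ and $J^2=B^2=I$, and therefore $\|u'\|_2\le\|D_mu\|_2$. Hence $Q$ is $D_m$-bounded with relative bound $0$. The free operator $D_m$ is self-adjoint on $H^1$; this follows by Fourier transform, since its symbol $J\xi+mB$ is Hermitian with eigenvalues $\pm\sqrt{\xi^2+m^2}$. Because $Q$ is Hermitian, multiplication by $Q$ is symmetric on $H^1$. Kato--Rellich then gives that $D_m+Q$ is self-adjoint on $H^1$. The same symbol computation confirms $\spec(D_m)=(-\infty,-m]\cup[m,\infty)$.

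\textbf{Relative compactness.} I will show that $Q(D_m-i)^{-1}$ is compact. First approximate $Q$ in $L^2$ by bounded, compactly supported $Q_k$. For each $k$, write
\[
 Q_k(D_m-i)^{-1}=Q_k\chi\cdot(D_m-i)^{-1},
\]
where $\chi$ is the indicator of a bounded interval containing the support of $Q_k$. The operator $\chi(D_m-i)^{-1}$ maps $L^2$ boundedly into $H^1$ functions restricted to that interval, which is compact into $L^2$ by Rellich. Alternatively, the factor $f(x)g(-i\,d/dx)$ with $f=Q_k$ and $g=(J\xi+mB-i)^{-1}$ is Hilbert--Schmidt, since both factors lie in $L^2$. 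Indeed, $|g(\xi)|\lesssim(1+|\xi|)^{-1}$ is square integrable in one dimension. This Hilbert--Schmidt argument even applies directly to $Q$ itself, without the approximation step. Its Hilbert--Schmidt norm is bounded by
\[
 (2\pi)^{-1/2}\|Q\|_{L^2}\|g\|_{L^2},
\]
with the appropriate matrix norms. So $Q(D_m-i)^{-1}$ is Hilbert--Schmidt and in particular compact.

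\textbf{Conclusion.} Weyl's theorem on relatively compact perturbations yields
\[
 \spec_{\rm ess}(D_m+Q)=\spec_{\rm ess}(D_m)=(-\infty,-m]\cup[m,\infty).
\]
The complement of the essential spectrum inside $(-m,m)$ therefore contains only isolated eigenvalues of finite multiplicity, which is exactly discreteness of the spectrum there.

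The only step needing care is the matrix-valued Hilbert--Schmidt estimate for $f(x)g(-i\,d/dx)$. Its kernel is $f(x)\check g(x-y)$, and the Hilbert--Schmidt norm squared is $\int\!\!\int |f(x)\check g(x-y)|_{HS}^2\,dx\,dy\le \|f\|_2^2\|\check g\|_2^2$, which is routine. I expect no genuine obstacle.
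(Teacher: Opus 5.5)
Your proposal is correct and is essentially the paper's proof: the one-dimensional Sobolev bound on $\|u\|_\infty$ makes $Q$ infinitesimally $D_m$-bounded, so Kato--Rellich gives self-adjointness on $H^1$, and the Hilbert--Schmidt (Kato--Seiler--Simon) estimate for $Q(D_m-i)^{-1}$, which rests on square integrability of the symbol $(J\xi+mB-i)^{-1}$, gives relative compactness and hence the essential spectrum by Weyl's theorem. Two cosmetic points: your opening phrase \emph{form-small} should read \emph{operator-bounded}, since that is what you actually prove, and the Rellich approximation detour is unnecessary, as you yourself note.
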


\begin{proof}
For $u\in H^1(\R)$,
\[
 \|Qu\|_2\le \|Q\|_{L^2({\rm op})}\|u\|_\infty
 \le \varepsilon\|u'\|_2+C_\varepsilon
      \|Q\|_{L^2({\rm op})}^2\|u\|_2.
\]
Thus multiplication by $Q$ is infinitesimally $D_m$-bounded, and
Kato--Rellich gives self-adjointness on $H^1$.  For non-real $z$, the Fourier multiplier of $(D_m-z)^{-1}$ is
square-integrable in the momentum variable.  The one-dimensional
Kato--Seiler--Simon estimate therefore makes $Q(D_m-z)^{-1}$
Hilbert--Schmidt.  The perturbation is relatively compact, so Weyl's theorem
gives the essential spectrum statement.
\end{proof}

For an interval $I$, let $\mathcal G(\HH;I)$ be the set of locally absolutely
continuous $J$-unitary gauges satisfying
$\HH=\Theta^{-*}\Theta^{-1}$.  Define
\begin{equation}\label{eq:Pmtheta}
 P_{m,\Theta}=Q_\Theta+m(\Theta^*B\Theta-B).
\end{equation}
This matrix is Hermitian.

\begin{lemma}[Gauge reduction and operator domain]\label{lem:gauge-reduction}
Let $I=\R$ or $I=\R_+$ and let $\Theta\in\mathcal G(\HH;I)$ satisfy
$P_{m,\Theta}\in L^2(I)$.  Multiplication
$U_\Theta f=\Theta f$ is unitary from the unweighted space
$L^2(I)$ onto
\[
 L^2_\HH(I)=\left\{y:\int_I y(t)^*\HH(t)y(t)dt<\infty\right\}.
\]
On the full line, the operator
\begin{equation}\label{eq:canonical-domain}
 \mathcal A_{m,\HH}^{(\Theta)}
 \coloneqq U_\Theta(D_m+P_{m,\Theta})U_\Theta^{-1},\qquad
 \Dom\mathcal A_{m,\HH}^{(\Theta)}=U_\Theta H^1(\R),
\end{equation}
is self-adjoint and acts almost everywhere as
$\HH^{-1}(-iJy'+mBy)$.  On the half-line, a maximal $J$-isotropic boundary
condition $y(0)\in\Lambda$ is transported to
\begin{equation}\label{eq:boundary-transport-general}
 f(0)\in L_\Theta:=\Theta(0)^{-1}\Lambda,
\end{equation}
and the corresponding domain is
$U_\Theta\{f\in H^1(\R_+):f(0)\in L_\Theta\}$.

If $\Theta_0$ and $\Theta_1$ are two gauges with
$P_{m,\Theta_j}\in L^2(I)$, then the transported full-line operators, and the
transported half-line operators with the same canonical boundary $\Lambda$,
coincide.  In particular, both the realization and its domain are gauge
independent.
\end{lemma}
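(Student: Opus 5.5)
The argument splits into three steps: unitarity of $U_\Theta$, the pointwise action together with transport of the domain and boundary condition, and gauge independence. Each step reduces to an algebraic identity plus a check that regularity classes match.

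\emph{Unitarity.} Since $\HH=\Theta^{-*}\Theta^{-1}$, for $f\in L^2(I)$ we get $(\Theta f)^*\HH(\Theta f)=f^*\Theta^*\Theta^{-*}\Theta^{-1}\Theta f=|f|^2$ pointwise. Hence $U_\Theta$ is isometric. It is onto because $\Theta$ is invertible a.e.: every $y\in L^2_\HH$ equals $\Theta(\Theta^{-1}y)$ with $\Theta^{-1}y\in L^2$.

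\emph{Action and boundary transport.} By Proposition~\ref{prop:L2-Dirac}, $D_m+P_{m,\Theta}$ is self-adjoint on $H^1(\R)$. Its unitary conjugate $\mathcal A^{(\Theta)}_{m,\HH}$ is therefore self-adjoint on $U_\Theta H^1$. To compute the action, take $y=\Theta f$ with $f\in H^1$. Then $y\in\AC_{\loc}$ and $y'=\Theta' f+\Theta f'$. Using $J$-unitarity, $\HH^{-1}=\Theta\Theta^*$ and $\Theta^*J\Theta=J$, so
\[
\HH^{-1}(-iJy'+mBy)=\Theta\bigl(-i\Theta^*J\Theta f'-i\Theta^*J\Theta'f+m\Theta^*B\Theta f\bigr)=\Theta\bigl(-iJf'+Q_\Theta f+m\Theta^*B\Theta f\bigr).
\]
The right-hand side is $\Theta(D_m+P_{m,\Theta})f$, as claimed. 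On the half-line, self-adjointness of $D_m+P$ with $f(0)\in L$ for any maximal $J$-isotropic (Lagrangian) subspace $L$ is standard: the free half-line Dirac operator with a Lagrangian condition is self-adjoint, and the Kato--Rellich argument of Proposition~\ref{prop:L2-Dirac} applies verbatim, since the Sobolev bound on $\|u\|_\infty$ holds on $\R_+$. We have $y(0)=\Theta(0)f(0)$, and $\Theta(0)^{-1}$ preserves the form $v^*Jw$ because $\Theta^{-1}$ is $J$-unitary. Hence $L_\Theta=\Theta(0)^{-1}\Lambda$ is again maximal $J$-isotropic, and the condition $y(0)\in\Lambda$ transports to $f(0)\in L_\Theta$.

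\emph{Gauge independence.} Write $\Theta_1=\Theta_0U$; then $U=\Theta_0^{-1}\Theta_1$ is $\AC_{\loc}$, unitary, and commutes with $J$, as in the proof of Proposition~\ref{prop:horizontal}. Since the action of both operators is the same differential expression, it suffices to show $\Theta_0H^1=\Theta_1H^1$, i.e.\ that multiplication by $U$ maps $H^1$ onto $H^1$, together with the matching boundary condition. On the half-line the latter is immediate, since $L_{\Theta_1}=U(0)^{-1}L_{\Theta_0}$. Because $|U|=1$, $Uf\in L^2$ for every $f\in L^2$. For the derivative, $(Uf)'=U'f+Uf'$, and $U'f\in L^2$ needs $U'\in L^2$ together with $f\in L^\infty$.

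\textbf{Main obstacle.} The difficulty is to show $U'\in L^2$ using only $P_{m,\Theta_j}\in L^2$. By the gauge law \eqref{eq:gauge-law}, $U^*Q_{\Theta_0}U-Q_{\Theta_1}=iU^*JU'$. Moreover, $\Theta_1^*B\Theta_1=U^*\Theta_0^*B\Theta_0U$, so the mass terms satisfy $U^*(\Theta_0^*B\Theta_0-B)U-(\Theta_1^*B\Theta_1-B)=B-U^*BU$. Therefore
\[
iU^*JU'=U^*P_{m,\Theta_0}U-P_{m,\Theta_1}+m(U^*BU-B).
\]
The first two terms lie in $L^2$. The third is bounded but need not be $L^2$, so this identity alone does not settle the question. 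I would bypass it with a direct domain argument. Let $f_0\in H^1$ with $y=\Theta_0f_0$, and set $f_1=U^*f_0$. Both $y$ and $f_1$ are $\AC_{\loc}$, and $f_1\in L^2$. The action computation, run for $\Theta_1$, gives $-iJf_1'=\Theta_1^{-1}\HH^{-1}(-iJy'+mBy)-P_{m,\Theta_1}f_1$, after correcting by the bounded free mass term. The first term equals $\Theta_1^{-1}\Theta_0(D_m+P_{m,\Theta_0})f_0=U^*(\cdots)\in L^2$, and $P_{m,\Theta_1}f_1\in L^2_{\loc}$ with a global $L^2$ bound because $f_1\in L^\infty$ locally. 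To make this precise, I would show that $f_1$ lies in the maximal domain of $D_m+P_{m,\Theta_1}$. On the full line, Proposition~\ref{prop:L2-Dirac} shows that $D_m+P_{m,\Theta_1}$ on $H^1$ is self-adjoint, and it is the limit-point realization, so its maximal domain equals $H^1$. This yields $f_1\in H^1$, hence $\Theta_0H^1\subset\Theta_1H^1$, and the reverse inclusion follows by symmetry. Identifying the maximal domain with $H^1$ is the step needing care: I would show that $g\in L^2\cap\AC_{\loc}$ with $g'+iJP_{m,\Theta_1}g\in L^2$ has $g\in L^\infty$, by a local Gronwall estimate using $\sup_t\int_t^{t+1}|P|\le\|P\|_2$, and conclude $g'\in L^2$.
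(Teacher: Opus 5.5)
Your proof is correct, and the unitarity, action and boundary-transport steps are essentially the paper's. The routes diverge at gauge independence, where the obstacle you flag is not real. From your identity $iU^*JU'=U^*P_{m,\Theta_0}U-P_{m,\Theta_1}+m(U^*BU-B)$ you get $U'=G_2+G_\infty$ with $G_2\in L^2$ and $G_\infty\in L^\infty$. For $f\in H^1\subset L^2\cap L^\infty$, the term $G_2f$ lies in $L^2$ because $f\in L^\infty$, and $G_\infty f$ lies in $L^2$ because $f\in L^2$. Hence multiplication by $U$, and likewise by $U^*$, maps $H^1$ onto $H^1$. Your sentence ``$U'f\in L^2$ needs $U'\in L^2$'' is the misstep, and this short argument is exactly the paper's proof.

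Your maximal-domain detour is nevertheless sound. The Gronwall bound on unit intervals uses $\int_t^{t+1}|P_{m,\Theta_1}|\,ds\le\|P_{m,\Theta_1}\|_2$ and a point of $[t,t+1]$ where $|g|$ is at most $\|g\|_{L^2(t,t+1)}$. It shows that every $g\in L^2\cap\AC_{\loc}$ with $(D_m+P_{m,\Theta_1})g\in L^2$ is bounded, hence $g'\in L^2$, on $\R$ and equally on $\R_+$.

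Your route buys an intrinsic description: $U_\Theta H^1(\R)=\{y\in L^2_\HH\cap\AC_{\loc}:\HH^{-1}(-iJy'+mBy)\in L^2_\HH\}$, so the domain is the maximal domain of the canonical expression. This makes gauge independence automatic and avoids the transformation law for $P_{m,\Theta}$. The paper's route is shorter, but it only compares two given gauges.
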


\begin{proof}
The identity $\Theta^*\HH\Theta=I$ proves unitarity.  Since
$\HH^{-1}=\Theta\Theta^*$, direct differentiation on compactly supported
smooth functions gives
\begin{equation}\label{eq:gauge-reduction}
 U_\Theta^{-1}\HH^{-1}
    \left(-iJ\frac d{dt}+mB\right)U_\Theta
 =D_m+P_{m,\Theta}.
\end{equation}
The action formula extends to the transported domain in
\eqref{eq:canonical-domain}.

For gauge independence write $\Theta_1=\Theta_0W$.  Equality of the two
Hamiltonians implies that $W$ is unitary, and $J$-unitarity implies that
$WJ=JW$.  The effective coefficients satisfy
\begin{equation}\label{eq:P-gauge-law}
 P_{m,\Theta_1}=W^*P_{m,\Theta_0}W-iW^*JW'
                  +m(W^*BW-B).
\end{equation}
After solving \eqref{eq:P-gauge-law} for $-iW^*JW'$, the terms involving
$P_{m,\Theta_1}$ and $P_{m,\Theta_0}$ belong to $L^2$, whereas
$W^*BW-B$ is bounded.  Hence $W'=G_2+G_\infty$ with $G_2\in L^2$ and
$G_\infty\in L^\infty$.  If $f\in H^1$, then $G_2f\in L^2$ by
$H^1\hookrightarrow L^\infty$ and $G_\infty f\in L^2$; therefore
multiplication by $W$ maps $H^1$ onto itself.  The same argument applies to
$W^*$.  Equation
\eqref{eq:P-gauge-law} then gives
\[
 D_m+P_{m,\Theta_1}=W^*(D_m+P_{m,\Theta_0})W
 \quad\text{on }H^1,
\]
and $U_{\Theta_1}=U_{\Theta_0}W$, which proves equality of the full-line
realizations.  On the half-line, $W(0)$ maps
$\Theta_1(0)^{-1}\Lambda$ onto $\Theta_0(0)^{-1}\Lambda$, so the same argument
also identifies the boundary domains.
\end{proof}

We henceforth denote the common full-line operator by $\mathcal A_{m,\HH}$.

\section{The full-line gap estimate}
For a self-adjoint operator $A$ with essential gap $(-m,m)$ set
\begin{equation}\label{eq:M32}
 \mathcal M_{3/2}(A)
 =\sum_{\lambda\in\spec_{\rm d}(A)\cap(-m,m)}
       (m-|\lambda|)^{3/2},
\end{equation}
with eigenvalues repeated according to multiplicity.  For a self-adjoint
operator $T$ and a Borel set $I$, write
$N(T\in I)=\Tr\mathbf 1_I(T)$ whenever this number is finite.

Set
\begin{equation}\label{eq:hm-def}
 h_m=\sqrt{-\frac{d^2}{dt^2}+m^2}-m
 \quad\text{in }L^2(\R).
\end{equation}

\begin{lemma}[Truncated pseudo-relativistic estimate]
\label{lem:truncated-pr}
For $v\ge0$ define
\begin{equation}\label{eq:Tmv}
 \mathcal T_m(v)
 =\sum_{\mu\in\spec_{\rm d}(h_m-v)\cap(-\infty,0)}
       \min\{|\mu|,m\}^{3/2}.
\end{equation}
There is a universal constant $C_{\rm pr}$ such that, for every
$v\in L^2(\R)$ with $v\ge0$,
\begin{equation}\label{eq:truncated-pr}
       \mathcal T_m(v)
       \le C_{\rm pr}m^{1/2}\int_\R v(t)^2\,dt.
\end{equation}
One may take
\begin{equation}\label{eq:Cpr}
 C_{\rm pr}=4\left(L_{3/2,1}(\sqrt2-1)^{-1/2}
       +L^{\rm rel}_{1,1}(\sqrt2-1)^{-1}\right),
\end{equation}
where $L_{3/2,1}$ and $L^{\rm rel}_{1,1}$ are any admissible constants in
the one-dimensional Lieb--Thirring inequalities for $p^2$ with moment
$3/2$ and for $|p|$ with moment $1$, respectively.
\end{lemma}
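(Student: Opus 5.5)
The idea is to compare $h_m$ with two simpler kinetic energies, one per momentum region, and then apply the classical one-dimensional Lieb--Thirring inequalities on each piece. The factor $4$ and the powers of $\sqrt2-1$ in \eqref{eq:Cpr} suggest exactly this structure.

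\emph{Step 1 (symbol bound).} Let $c=\sqrt2-1$. From $\sqrt{p^2+m^2}-m=p^2/(\sqrt{p^2+m^2}+m)$ we get
\[
\sqrt{p^2+m^2}-m\ \ge\ c\,\frac{p^2}{m}\quad (|p|\le m),\qquad
\sqrt{p^2+m^2}-m\ \ge\ c\,|p|\quad (|p|\ge m).
\]
The second bound holds because $(\sqrt{p^2+m^2}-m)/|p|$ is increasing in $|p|$ and equals $c$ at $|p|=m$.

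\emph{Step 2 (decoupling).} Let $\Pi_<$ and $\Pi_>$ be the Fourier projections onto $\{|p|\le m\}$ and $\{|p|>m\}$. Both commute with $h_m$. For $\psi=\psi_<+\psi_>$, the inequality $v\le 2\Pi_<v\Pi_<+2\Pi_>v\Pi_>$ in form sense (Cauchy--Schwarz for $v\ge0$) gives
\[
h_m-v\ \ge\ \Pi_<\bigl(c\,p^2/m-2v\bigr)\Pi_<\ \oplus\ \Pi_>\bigl(c|p|-2v\bigr)\Pi_> .
\]
By min--max, the $k$-th negative eigenvalue of $h_m-v$ lies above the $k$-th negative eigenvalue of the right-hand side. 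Since $x\mapsto\min\{x,m\}^{3/2}$ is nondecreasing, $\mathcal T_m(v)$ is therefore at most the corresponding truncated moment of the direct sum, that is, the sum of the truncated moments of the two compressions.

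Compressing further cannot create deeper negative eigenvalues: the $k$-th eigenvalue of $\Pi A\Pi$ on $\operatorname{ran}\Pi$ is at least the $k$-th eigenvalue of $A$ on $L^2(\R)$, again by min--max. So each compression is dominated by the full operators
\[
A_1=c\,p^2/m-2v,\qquad A_2=c|p|-2v\qquad\text{on }L^2(\R).
\]

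\emph{Step 3 (apply Lieb--Thirring).} For $A_1$, bound $\min\{|\mu|,m\}^{3/2}\le|\mu|^{3/2}$. Scaling $p^2$ by $c/m$ and using the Lieb--Thirring inequality for $p^2$ with moment $3/2$ gives
\[
\sum|\mu|^{3/2}\le L_{3/2,1}\,(m/c)^{1/2}\int(2v)^2 = 4L_{3/2,1}\,c^{-1/2}m^{1/2}\int v^2 .
\]
For $A_2$, bound $\min\{|\mu|,m\}^{3/2}\le m^{1/2}|\mu|$. The Lieb--Thirring inequality for $|p|$ with moment $1$ (in one dimension it needs exactly $\int V^2$) then gives
\[
\sum|\mu|\le L^{\rm rel}_{1,1}\,c^{-1}\int(2v)^2 = 4L^{\rm rel}_{1,1}\,c^{-1}\int v^2 .
\]
Adding the two bounds yields \eqref{eq:truncated-pr} with the constant \eqref{eq:Cpr}. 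Discreteness of the negative spectrum of $h_m-v$ follows from relative compactness, as in Proposition~\ref{prop:L2-Dirac}.

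\emph{Main obstacle.} The delicate point is Step 2: one must make sure that passing to compressions and to the direct sum loses nothing beyond the factor $2$ in the potential. This rests on the eigenvalue-wise min--max comparison for both operations, together with the monotonicity of the truncated moment function. The truncation is what lets the high-momentum piece be controlled by the moment-$1$ relativistic bound, which involves only $\|v\|_2$. Without truncation one would need a $3/2$-moment bound for $|p|$, and that requires $\int v^{5/2}$.
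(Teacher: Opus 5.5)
Your proof is correct and gives the same constant. Its skeleton matches the paper's: the symbol bounds with $c=\sqrt2-1$, the cut at $|p|=m$, a loss of exactly a factor $2$ in the potential, and $\min\{s,m\}^{3/2}\le m^{1/2}s$ on the high-momentum part.

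The difference is in how you decouple the two momentum regions. The paper works with the Birman--Schwinger operator $K_\tau=v^{1/2}(h_m+\tau)^{-1}v^{1/2}$ and splits it along $P=\mathbf 1_{\{|p|\le m\}}$ into $K_1+K_2$. It dominates each piece by the resolvent of $cp^2/m$ or $c|p|$, then uses the Ky Fan inequality $N(K_1+K_2>1)\le N(K_1>1/2)+N(K_2>1/2)$ and the layer-cake formula. It also proves the bound first for bounded compactly supported $v$. It then reaches general $v\in L^2$ by norm-resolvent convergence, rank convergence of spectral projections, and Fatou's lemma.

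You decouple directly at the level of quadratic forms, via $v\le2\Pi_<v\Pi_<+2\Pi_>v\Pi_>$, and compare min--max values. This gives an eigenvalue-by-eigenvalue domination, equivalently $N(h_m-v<-\tau)\le N(A_1<-\tau)+N(A_2<-\tau)$ for every $\tau>0$, using neither Birman--Schwinger nor Ky Fan.

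Your comparison only needs $h_m-v$, $A_1$, $A_2$ and the two compressions to be well defined with essential spectrum contained in $[0,\infty)$. For $v\in L^2$ this holds by the relative-boundedness and Kato--Seiler--Simon argument you cite. Both Lieb--Thirring inequalities also hold for $L^2$ potentials, so your route removes the approximation step altogether.

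What the paper's formulation buys in return is uniformity: this lemma and the Dirac reduction of Lemma~\ref{lem:two-sided-dirac} both run through the same Birman--Schwinger machinery.
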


\begin{proof}
We first assume that $v$ is bounded and compactly supported.  Let
$p=-i\,d/dt$, let $P=\mathbf 1_{\{|p|\le m\}}$, and put $c=\sqrt2-1$.
The symbol of $h_m$ obeys
\begin{equation}\label{eq:hm-low-high}
 h_m(p)\ge c\frac{p^2}{m}\quad (|p|\le m),
 \qquad
 h_m(p)\ge c|p|\quad (|p|\ge m).
\end{equation}
Indeed,
$h_m(p)=p^2/(\sqrt{p^2+m^2}+m)$, which gives the first inequality, and
$h_m(p)/|p|$ is increasing for $|p|\ge m$ and equals $c$ at $|p|=m$.

Fix $\tau>0$ and set
\[
 K_\tau=v^{1/2}(h_m+\tau)^{-1}v^{1/2}.
\]
Because $P$ commutes with $h_m$,
\begin{align*}
 K_\tau
 &=v^{1/2}P(h_m+\tau)^{-1}Pv^{1/2}\\
 &\quad+v^{1/2}P^\perp(h_m+\tau)^{-1}P^\perp v^{1/2}.
\end{align*}
The inequalities in \eqref{eq:hm-low-high} imply
\begin{align*}
 P(h_m+\tau)^{-1}P
 &\le P\left(c\frac{p^2}{m}+\tau\right)^{-1}P
 \le\left(c\frac{p^2}{m}+\tau\right)^{-1},\\
 P^\perp(h_m+\tau)^{-1}P^\perp
 &\le P^\perp(c|p|+\tau)^{-1}P^\perp
 \le(c|p|+\tau)^{-1}.
\end{align*}
The Birman--Schwinger principle and the Ky Fan counting inequality
$N(K_1+K_2>1)\le N(K_1>1/2)+N(K_2>1/2)$ therefore give
\begin{equation}\label{eq:pr-count-split}
 N(h_m-v<-\tau)
 \le N\left(c\frac{p^2}{m}-2v<-\tau\right)
      +N(c|p|-2v<-\tau).
\end{equation}

The layer-cake formula now yields
\begin{align*}
 \mathcal T_m(v)
 &=\frac32\int_0^m \tau^{1/2}N(h_m-v<-\tau)\,d\tau\\
 &\le \Tr\left(c\frac{p^2}{m}-2v\right)_{-}^{3/2}
 +\sum_{\substack{\nu<0\\ \nu\in\spec(c|p|-2v)}}
       \min\{|\nu|,m\}^{3/2}.
\end{align*}
The one-dimensional Lieb--Thirring inequality with exponent $3/2$ bounds
the first term.  For the second term we use
$\min\{s,m\}^{3/2}\le m^{1/2}s$ and the moment-one inequality for
$|p|$; see \cite{LiebThirring1976} and \cite[Eq.~(2.14)]{Daubechies1983}.  Scaling gives
\[
 \mathcal T_m(v)
 \le4m^{1/2}\left(L_{3/2,1}c^{-1/2}
             +L^{\rm rel}_{1,1}c^{-1}\right)\int_\R v(t)^2\,dt.
\]

For a general non-negative $v\in L^2(\R)$, choose bounded compactly
supported $v_k\ge0$ with $v_k\to v$ in $L^2$.  All forms
\[
 q_k[u]=\langle u,h_m u\rangle-\int_\R v_k|u|^2,
 \qquad \Dom q_k=H^{1/2}(\R),
\]
are closed and uniformly lower semibounded.  Moreover,
\[
 |q_k[u]-q[u]|
 \le \|v_k-v\|_2\|u\|_4^2
 \le C\|v_k-v\|_2\|u\|_{H^{1/2}}^2.
\]
The form norm of $h_m+1$ is equivalent to the $H^{1/2}$ norm.  Moreover,
multiplication by $v$ is form-compact relative to $h_m$: on every bounded
interval $I$, this follows from the compact embedding
$H^{1/2}(I)\Subset L^4(I)$; see, for example,
\cite[Sections~6 and~7]{DiNezzaPalatucciValdinoci2012}.  The tail is small by
H\"older's inequality and the global continuous embedding
$H^{1/2}(\R)\hookrightarrow L^4(\R)$.  The same applies to every $v_k$.
Thus $q_k\to q$ in form norm and $h_m-v_k\to h_m-v$ in norm resolvent.
For almost every $\tau\in(0,m)$, the point $-\tau$ is not an eigenvalue of
$h_m-v$; at every such $\tau$, the finite-rank spectral projections onto
$(-\infty,-\tau)$ converge in norm and hence in rank.  Applying Fatou's
lemma to the layer-cake formula and using $\|v_k\|_2\to\|v\|_2$ proves
\eqref{eq:truncated-pr}.
\end{proof}

\begin{lemma}[Two-sided reduction to scalar operators]
\label{lem:two-sided-dirac}
Let $Q=Q^*$ be bounded and compactly supported, write
$Q=Q_+-Q_-$, and set $v_\pm(t)=\|Q_\pm(t)\|_{\rm op}$.  Then, for almost
every $E\in(0,m)$ and $\tau=m-E$,
\begin{equation}\label{eq:dirac-to-pr}
 N(D_m+Q\in(-E,E))
 \le2n\left(N(h_m-v_-<-\tau)+N(h_m-v_+<-\tau)\right).
\end{equation}
\end{lemma}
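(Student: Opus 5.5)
The plan is to follow the Frank--Simon scheme in three steps. First, decouple the two signs of $Q$. Second, replace each sign-definite matrix piece by a scalar multiple of the identity. Third, use the Birman--Schwinger principle at the gap edge $E$ to reduce to the scalar operators $h_m - v_\pm$.

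\emph{Step 1 (decoupling).} Write $Q=Q_+-Q_-$ with $Q_\pm\ge0$ bounded and compactly supported, so that $Q_\pm(D_m-z)^{-1}$ is compact. I would invoke the two-sided gap decoupling of \cite[Theorem~1.4 and Section~7]{FrankSimon2011} for a self-adjoint $A=D_m$ with gap $(-m,m)$ and relatively compact perturbation $V_+-V_-$. It gives, for $E\in(0,m)$ such that $\pm E$ are not eigenvalues of the operators involved (true for a.e.\ $E$),
\[
 N(D_m+Q\in(-E,E))\le N(D_m-Q_-\in(-E,E))+N(D_m+Q_+\in(-E,E)).
\]
Behind this lies the continuity and monotonicity of the eigenvalues of $D_m+Q_+-sQ_-$ and $D_m+sQ_+$ in $s\in[0,1]$. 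Each eigenvalue present in $(-E,E)$ at the end either enters through one of the edges $\pm E$ during the $Q_+$-switch-on, which only pushes eigenvalues up, or during the $Q_-$-switch-on, which only pushes eigenvalues down. Each crossing is attributed to the sign-definite operator. I expect this bookkeeping, in particular its transfer to the matrix setting with the correct one-sided direction of crossings, to be the main obstacle. I would mostly cite Frank--Simon rather than reprove it.

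\emph{Step 2 (one sign).} Treat $D_m-Q_-$; the case $D_m+Q_+$ follows by the charge-conjugation symmetry $\lambda\mapsto-\lambda$, or by the same argument at $-E$. Since the eigenvalues of $D_m-sQ_-$ decrease in $s$ and there are none in the gap at $s=0$, every eigenvalue in $(-E,E)$ at $s=1$ crossed the level $E$ downward. By the Birman--Schwinger principle the number of such crossings is at most
\[
 N\bigl(Q_-^{1/2}(D_m-E)^{-1}Q_-^{1/2}>1\bigr).
\]
In Fourier variables, $(D_m-E)^{-1}=\Pi_+(p)(\sqrt{p^2+m^2}-E)^{-1}-\Pi_-(p)(\sqrt{p^2+m^2}+E)^{-1}$. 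Discarding the negative $\Pi_-$ term and using $\Pi_+\le I_{2n}$ gives the operator inequality
\[
 (D_m-E)^{-1}\le(h_m+\tau)^{-1}\otimes I_{2n},
\]
because $\sqrt{p^2+m^2}-E=h_m(p)+\tau$.

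\emph{Step 3 (scalarization).} Since $Q_-\le v_-I_{2n}$, I would use the identity $N(X^*X>1)=N(XX^*>1)$ to pass from $Q_-^{1/2}(h_m+\tau)^{-1}Q_-^{1/2}$ to $(h_m+\tau)^{-1/2}Q_-(h_m+\tau)^{-1/2}$. This is dominated by $(h_m+\tau)^{-1/2}v_-(h_m+\tau)^{-1/2}\otimes I_{2n}$. Passing back by the same identity, the counting function is at most $2n\,N\bigl(v_-^{1/2}(h_m+\tau)^{-1}v_-^{1/2}>1\bigr)$. By the scalar Birman--Schwinger principle this equals $2n\,N(h_m-v_-<-\tau)$, for a.e.\ $\tau$ (avoiding eigenvalues). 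The same reasoning for $Q_+$ yields the $v_+$ term, and summing gives \eqref{eq:dirac-to-pr}.
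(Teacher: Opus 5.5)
Your Steps 2 and 3 are sound and match the paper in substance. The paper scalarizes by writing $Q_\pm^{1/2}=C_\pm v_\pm^{1/2}$ and using $s_j(TC)\le s_j(T)$; your swap $X^*X\leftrightarrow XX^*$ together with $Q_-\le v_-I_{2n}$ is an equivalent route. The gap is Step 1. The inequality
\[
 N(D_m+Q\in(-E,E))\le N(D_m-Q_-\in(-E,E))+N(D_m+Q_+\in(-E,E))
\]
is not what Frank--Simon prove. In the abstract form in which you invoke it (self-adjoint $A$ with a gap, relatively compact $V_+-V_-$, $V_+V_-=0$) it is false.

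Here is a counterexample. Take $A=\operatorname{diag}(1,-1)\oplus A_1$ on $\C^2\oplus\mathcal K$ with $\sigma(A_1)=\{\pm1\}$ of infinite multiplicity, so the gap is $(-1,1)$. Let $V_+=a\,uu^*$ and $V_-=b\,ww^*$ with $u=(\cos\phi,\sin\phi)^{\top}$ and $w=(-\sin\phi,\cos\phi)^{\top}$. In the basis $(u,w)$ the perturbed block is $\left(\begin{smallmatrix}c+a&-s\\-s&-c-b\end{smallmatrix}\right)$ with $c=\cos2\phi$, $s=\sin2\phi$. Choose $c=-0.995$, $a=0.85$, $b=1.15$. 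Then the gap eigenvalues are:
\begin{itemize}
\item $-0.05$ and $-0.25$ for $A+V_+-V_-$;
\item about $-0.143$ for $A-V_-$;
\item about $-0.154$ for $A+V_+$.
\end{itemize}
So for every $E\in(0.05,0.14)$ the left side equals $1$ and the right side equals $0$.

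Your bookkeeping breaks because the crossings in the second switching stage are crossings for $A+V_+-sV_-$, not for the sign-definite operator $A-sV_-$. In the example, the eigenvalue of $A-sV_-$ descending from the upper band sweeps through the whole window $(-E,E)$, so it is invisible in $N(A-V_-\in(-E,E))$. Switching on $V_+$ then pushes it back into the window by level repulsion with the ascending eigenvalue.

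The repair is to start from the decoupling in its Birman--Schwinger form, as the paper does:
\[
 N(D_m+Q\in(-E,E))\le N\bigl(Q_-^{1/2}(D_m-E)^{-1}Q_-^{1/2}>1\bigr)+N\bigl(Q_+^{1/2}(D_m+E)^{-1}Q_+^{1/2}<-1\bigr).
\]
The first term counts every downward crossing of $E$ by $D_m-sQ_-$, $0<s<1$, including complete traversals of the window. This is exactly what your intermediate quantity $N(D_m-Q_-\in(-E,E))$ loses. In the example this term equals $1$, since $b(E-c)/(1-E^2)>1$, so the correct inequality holds there. With this inequality as the starting point, the monotonicity discussion in your Step 2 becomes unnecessary. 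Your free-resolvent comparison $(D_m-E)^{-1}\le(h_m+\tau)^{-1}\otimes I_{2n}$ and your scalarization then finish the proof as in the paper.
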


\begin{proof}
The two-sided decoupling formula of Frank--Simon, proved in
\cite[Theorem~1.4]{FrankSimon2011} and applied to the unbounded-below Dirac
operator in the proof of \cite[Theorem~7.2]{FrankSimon2011}, gives
\begin{align}
 N(D_m+Q\in(-E,E))
 &\le N\left(Q_-^{1/2}(D_m-E)^{-1}Q_-^{1/2}>1\right)\notag\\
 &\quad+N\left(Q_+^{1/2}(D_m+E)^{-1}Q_+^{1/2}<-1\right)
 \label{eq:dirac-gap-decoupling}
\end{align}
whenever the endpoints avoid the discrete spectra of the intermediate
operators.  Although the abstract theorem is formulated there for lower-semibounded
operators, the authors apply the resulting decoupling formula directly to the
Dirac operator in their Theorem~7.2.  Their argument also allows noncommuting
$Q_+$ and $Q_-$.  The excluded set of $E$ is discrete and
therefore negligible in the subsequent integration.

Let $r(p)=\sqrt{p^2+m^2}$.  The eigenvalues of the Fourier symbol of $D_m$
are $\pm r(p)$, each with multiplicity $n$.  Consequently,
\begin{equation}\label{eq:free-resolvent-order}
 (D_m-E)^{-1}\le(h_m+\tau)^{-1}I_{2n},
 \qquad
 -(D_m+E)^{-1}\le(h_m+\tau)^{-1}I_{2n}.
\end{equation}
For example, the positive eigenvalue of $(D_m-E)^{-1}$ is
$(r(p)-E)^{-1}=(h_m(p)+\tau)^{-1}$, while its other eigenvalue is negative.

It remains to replace $Q_\pm$ by their operator norms.  Pointwise write
$Q_\pm^{1/2}=C_\pm v_\pm^{1/2}$, where $C_\pm$ are positive multiplication
contractions.  If
$K_\pm=v_\pm^{1/2}(h_m+\tau)^{-1}v_\pm^{1/2}$ on scalar $L^2(\R)$, then the
nonzero eigenvalues of
$C_\pm(K_\pm\otimes I_{2n})C_\pm$ are bounded, with multiplicity, by those
of $K_\pm\otimes I_{2n}$.  Indeed,
$C_\pm(K_\pm\otimes I_{2n})C_\pm
 =((K_\pm^{1/2}\otimes I_{2n})C_\pm)^*
   ((K_\pm^{1/2}\otimes I_{2n})C_\pm)$,
and the assertion follows from the singular-value inequality
$s_j(TC)\le s_j(T)$ for a contraction $C$.  Combining this observation with
\eqref{eq:free-resolvent-order} and the scalar Birman--Schwinger principle
gives
\[
 N\left(Q_\pm^{1/2}(h_m+\tau)^{-1}Q_\pm^{1/2}>1\right)
 \le2nN(h_m-v_\pm<-\tau).
\]
Substitution into \eqref{eq:dirac-gap-decoupling} proves
\eqref{eq:dirac-to-pr}.
\end{proof}

\begin{proposition}[Matrix Dirac estimate]\label{prop:matrix-dirac}
Every Hermitian $Q\in L^2(\R;\C^{2n\times2n})$ satisfies
\begin{equation}\label{eq:matrix-dirac}
 \mathcal M_{3/2}(D_m+Q)
 \le C_n m^{1/2}\int_\R\Tr|Q(t)|^2\,dt,
 \qquad C_n=2nC_{\rm pr}.
\end{equation}
\end{proposition}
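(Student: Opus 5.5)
First I would treat bounded, compactly supported Hermitian $Q$, and then pass to general $Q\in L^2$ by approximation.

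\emph{Layer-cake reduction.} Since $(-m,m)$ is symmetric, an eigenvalue $\lambda$ has $m-|\lambda|>\tau$ exactly when $|\lambda|<m-\tau=E$. The layer-cake formula therefore gives
\[
 \mathcal M_{3/2}(D_m+Q)=\frac32\int_0^m\tau^{1/2}\,N\bigl(D_m+Q\in(-(m-\tau),m-\tau)\bigr)\,d\tau .
\]
The exceptional set of $E$ in Lemma~\ref{lem:two-sided-dirac} is discrete, so I can insert \eqref{eq:dirac-to-pr} under the integral for almost every $\tau$. Reading the layer-cake formula in \eqref{eq:Tmv} backwards, each resulting term equals $\frac32\int_0^m\tau^{1/2}N(h_m-v_\pm<-\tau)\,d\tau=\mathcal T_m(v_\pm)$. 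This yields
\[
 \mathcal M_{3/2}(D_m+Q)\le 2n\bigl(\mathcal T_m(v_-)+\mathcal T_m(v_+)\bigr).
\]
Lemma~\ref{lem:truncated-pr} then bounds the right-hand side by $2nC_{\rm pr}m^{1/2}\int(v_+^2+v_-^2)\,dt$. Pointwise, $Q_+$ and $Q_-$ have orthogonal supports, and $\|Q_\pm\|_{\rm op}^2\le\Tr Q_\pm^2$. Hence $v_+^2+v_-^2\le\Tr Q_+^2+\Tr Q_-^2=\Tr|Q|^2$, which gives \eqref{eq:matrix-dirac} with $C_n=2nC_{\rm pr}$ for nice $Q$.

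\emph{Approximation.} For general Hermitian $Q\in L^2$, I choose bounded compactly supported Hermitian $Q_k$ with $Q_k\to Q$ in $L^2$; for example, truncate the support and cut off the eigenvalues of $Q(t)$ at height $k$. The estimate in the proof of Proposition~\ref{prop:L2-Dirac} shows that $Q_k-Q$ is $D_m$-bounded with relative bound tending to zero:
\[
 \|(Q_k-Q)u\|\le\|Q_k-Q\|_{L^2({\rm op})}\|u\|_{H^1}.
\]
Therefore $D_m+Q_k\to D_m+Q$ in norm resolvent sense, with all operators on the common domain $H^1$. Fix $E\in(0,m)$ such that $\pm E$ are not eigenvalues of $D_m+Q$; this excludes only a discrete set. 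For such $E$ the spectral projections of the interval $(-E,E)$ converge in norm, so their ranks converge. Fatou's lemma in the layer-cake integral, together with $\int\Tr|Q_k|^2\to\int\Tr|Q|^2$, transfers the bound to $Q$.

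\emph{Main obstacle.} I expect the approximation step to be the only delicate point. It needs the eigenvalue counts in $(-E,E)$ to behave lower semicontinuously under the approximation. Norm resolvent convergence supplies this once the endpoints avoid the spectrum, because the essential spectrum stays fixed at $(-\infty,-m]\cup[m,\infty)$ by Proposition~\ref{prop:L2-Dirac}. Everything else is bookkeeping built on Lemmas~\ref{lem:truncated-pr} and~\ref{lem:two-sided-dirac}.
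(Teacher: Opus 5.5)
Your proposal is correct and follows the paper's proof essentially step for step. For bounded, compactly supported $Q$ you combine the layer-cake identity with Lemma~\ref{lem:two-sided-dirac} and Lemma~\ref{lem:truncated-pr} and then use the pointwise bound $v_+^2+v_-^2\le\Tr|Q|^2$; for general $Q\in L^2$ you pass to the limit via norm-resolvent approximation, rank convergence of spectral projections for almost every $E$, and Fatou's lemma. You also justify the pointwise trace bound (via $Q_+Q_-=0$) and specify a concrete approximating sequence, both of which the paper leaves implicit.
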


\begin{proof}
Assume first that $Q$ is bounded and compactly supported.  The layer-cake
identity
\begin{equation}\label{eq:dirac-layer-cake}
 \mathcal M_{3/2}(D_m+Q)
 =\frac32\int_0^m(m-E)^{1/2}
     N(D_m+Q\in(-E,E))\,dE
\end{equation}
and Lemmas~\ref{lem:two-sided-dirac} and~\ref{lem:truncated-pr} imply
\[
 \mathcal M_{3/2}(D_m+Q)
 \le 2n C_{\rm pr}m^{1/2}\int_\R(v_+^2+v_-^2)\,dt.
\]
Since $v_+^2+v_-^2\le\Tr|Q|^2$ pointwise, this proves
\eqref{eq:matrix-dirac} in the bounded, compactly supported case.

Now let $Q\in L^2$ be arbitrary and choose bounded compactly supported
Hermitian matrices $Q_k\to Q$ in $L^2$.  The infinitesimal relative-bound
estimate in Proposition~\ref{prop:L2-Dirac} gives norm-resolvent convergence
$D_m+Q_k\to D_m+Q$.  For almost every $E\in(0,m)$, neither $E$ nor $-E$ is
an eigenvalue of the limit operator.  At each such $E$, the spectral
projections onto $(-E,E)$ converge in norm and therefore in rank.  Fatou's
lemma in \eqref{eq:dirac-layer-cake}, followed by
$\|Q_k-Q\|_2\to0$, completes the proof.
\end{proof}

Define the optimal-gauge energy
\begin{equation}\label{eq:Em}
 \mathfrak E_m(\HH;I)
 =\inf_{\substack{\Theta\in\mathcal G(\HH;I)\\ P_{m,\Theta}\in L^2(I)}}
       \int_I\Tr|P_{m,\Theta}(t)|^2dt,
\end{equation}
with value $+\infty$ if the admissible set is empty.

\begin{theorem}[Main full-line theorem]\label{thm:main-full}
Let $m>0$ and let $\HH\in\AC_{\loc}(\R)$ be positive definite with
$\HH J\HH=J$.  If $\mathfrak E_m(\HH;\R)<\infty$, then
\begin{equation}\label{eq:ess-full}
 \spec_{\rm ess}(\mathcal A_{m,\HH})
 =(-\infty,-m]\cup[m,\infty)
\end{equation}
and
\begin{equation}\label{eq:main-full}
 \mathcal M_{3/2}(\mathcal A_{m,\HH})
 \le C_n m^{1/2}\mathfrak E_m(\HH;\R).
\end{equation}
\end{theorem}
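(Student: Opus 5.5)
The plan is to combine the gauge reduction of Lemma~\ref{lem:gauge-reduction} with the matrix Dirac estimate of Proposition~\ref{prop:matrix-dirac}, and then take the infimum over admissible gauges.

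First, since $\mathfrak E_m(\HH;\R)<\infty$, the admissible set in \eqref{eq:Em} is nonempty. Fix any $\Theta\in\mathcal G(\HH;\R)$ with $P_{m,\Theta}\in L^2(\R)$. By Lemma~\ref{lem:gauge-reduction}, $\mathcal A_{m,\HH}=U_\Theta(D_m+P_{m,\Theta})U_\Theta^{-1}$ with $U_\Theta\colon L^2(\R)\to L^2_\HH(\R)$ unitary, and this operator does not depend on the admissible gauge chosen. Hence $\mathcal A_{m,\HH}$ and $D_m+P_{m,\Theta}$ have the same spectrum, the same essential spectrum, and the same discrete eigenvalues with multiplicities. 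Since $P_{m,\Theta}$ is Hermitian by construction and lies in $L^2$, Proposition~\ref{prop:L2-Dirac} gives \eqref{eq:ess-full} and discreteness of the spectrum in $(-m,m)$. In particular $\mathcal M_{3/2}(\mathcal A_{m,\HH})$ is well defined.

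Next, unitary invariance gives $\mathcal M_{3/2}(\mathcal A_{m,\HH})=\mathcal M_{3/2}(D_m+P_{m,\Theta})$. Proposition~\ref{prop:matrix-dirac} with $Q=P_{m,\Theta}$ then yields
\[
 \mathcal M_{3/2}(\mathcal A_{m,\HH})\le C_n m^{1/2}\int_\R\Tr|P_{m,\Theta}(t)|^2\,dt .
\]
The left-hand side does not depend on $\Theta$; this is exactly where the gauge independence in Lemma~\ref{lem:gauge-reduction} is used. So we may take the infimum over all admissible $\Theta$, which gives \eqref{eq:main-full}.

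There is no substantial obstacle, because the analytic work sits in Lemma~\ref{lem:gauge-reduction} and Proposition~\ref{prop:matrix-dirac}. The only step needing care is confirming that the operator $\mathcal A_{m,\HH}$ in the theorem is the common realization of Lemma~\ref{lem:gauge-reduction}, so that one fixed operator is compared against every admissible gauge before the infimum is taken.
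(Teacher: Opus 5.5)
Your proposal is correct and follows the paper's own proof: fix an admissible gauge, transfer via the unitary $U_\Theta$ of Lemma~\ref{lem:gauge-reduction}, read off the essential spectrum from Proposition~\ref{prop:L2-Dirac}, bound the moment by Proposition~\ref{prop:matrix-dirac}, and take the infimum over admissible gauges. Your remark that the gauge independence in Lemma~\ref{lem:gauge-reduction} is what makes the left-hand side a single fixed quantity is exactly the point the paper leaves implicit.
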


\begin{proof}
For every admissible gauge, Lemma~\ref{lem:gauge-reduction} and
Proposition~\ref{prop:matrix-dirac} give
\[
 \mathcal M_{3/2}(\mathcal A_{m,\HH})
 \le C_n m^{1/2}\int_\R\Tr|P_{m,\Theta}|^2dt.
\]
The essential spectrum follows from the same unitary reduction and
Proposition~\ref{prop:L2-Dirac}.  Taking the infimum proves
\eqref{eq:main-full}.
\end{proof}

\begin{corollary}[Explicit positive-lift bound]\label{cor:positive-lift}
Let $R=\HH^{-1/2}$ and
\begin{equation}\label{eq:VmH}
 V_{m,\HH}=-iRJR'+m(RBR-B).
\end{equation}
If $V_{m,\HH}\in L^2(\R)$, then
\begin{equation}\label{eq:positive-lift-bound}
 \mathcal M_{3/2}(\mathcal A_{m,\HH})
 \le C_n m^{1/2}\int_\R\Tr|V_{m,\HH}(t)|^2dt.
\end{equation}
If, additionally, $cI\le\HH\le c^{-1}I$ and
$\HH-I,\HH'\in L^2(\R)$, then
\begin{equation}\label{eq:sobolev-bound}
 \mathcal M_{3/2}(\mathcal A_{m,\HH})
 \le C_{n,c}m^{1/2}\int_\R
   \left(\Tr|\HH'|^2+m^2\Tr|\HH-I|^2\right)dt.
\end{equation}
\end{corollary}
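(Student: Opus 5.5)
The first bound is a special case of Theorem~\ref{thm:main-full}. By Lemma~\ref{lem:positive-lift}, $R=\HH^{-1/2}$ is a locally absolutely continuous $J$-unitary gauge representing $\HH$, so $R\in\mathcal G(\HH;\R)$. Since $R^*=R$, we have $P_{m,R}=-iRJR'+m(RBR-B)=V_{m,\HH}$. The hypothesis $V_{m,\HH}\in L^2$ says exactly that $R$ is admissible in \eqref{eq:Em}. Hence $\mathfrak E_m(\HH;\R)\le\int\Tr|V_{m,\HH}|^2<\infty$, and \eqref{eq:main-full} gives \eqref{eq:positive-lift-bound}.

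For \eqref{eq:sobolev-bound}, the plan is to bound $V_{m,\HH}$ pointwise by $|\HH'|+m|\HH-I|$, with constants depending only on $n$ and $c$. We then apply the first part. The argument has two pieces.

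\emph{The mass term.} Write $RBR-B=(R-I)BR+B(R-I)$. This gives $\|RBR-B\|_{\rm HS}\le(\|R\|+1)\|R-I\|_{\rm HS}$. Because $cI\le\HH\le c^{-1}I$, the spectrum of $\HH$ lies in the compact interval $[c,c^{-1}]$, and $\|R\|\le c^{-1/2}$. On that interval the scalar function $x\mapsto x^{-1/2}-1$ vanishes at $1$ and is Lipschitz. Therefore $|x^{-1/2}-1|\le C_c|x-1|$, and diagonalizing the Hermitian matrix $\HH$ gives $\Tr|R-I|^2\le C_c^2\Tr|\HH-I|^2$.

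\emph{The derivative term.} Since $\|RJ\|\le c^{-1/2}$, it suffices to bound $\Tr|R'|^2\le C_c\Tr|\HH'|^2$. I would differentiate $f(\HH)$ for $f(x)=x^{-1/2}$ using the Daleckii--Krein formula. In an eigenbasis of $\HH(t)$, the entries of $R'$ are $f^{[1]}(\lambda_i,\lambda_j)(\HH')_{ij}$. The divided differences are bounded by $\sup_{[c,c^{-1}]}|f'|=\tfrac12c^{-3/2}$, which gives the Hilbert--Schmidt bound. This holds almost everywhere, because $\HH$ is locally absolutely continuous and the eigenbasis may be chosen pointwise. The differentiation formula holds a.e. by the smooth functional calculus, as in Lemma~\ref{lem:positive-lift}.

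Combining the two pieces gives
\[
 \Tr|V_{m,\HH}|^2\le 2c^{-1}\Tr|R'|^2+2m^2(c^{-1/2}+1)^2\Tr|R-I|^2 .
\]
This is bounded by $C'_{c}\bigl(\Tr|\HH'|^2+m^2\Tr|\HH-I|^2\bigr)$. So $V_{m,\HH}\in L^2$, and \eqref{eq:sobolev-bound} follows with $C_{n,c}=C_nC'_c$.

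The only step needing care is the a.e.\ justification of the Daleckii--Krein bound under mere absolute continuity. If that is awkward, I would instead use the integral representation
\[
 \HH^{-1/2}=\frac1\pi\int_0^\infty s^{-1/2}(\HH+s)^{-1}\,ds .
\]
Differentiating under the integral gives
\[
 R'=-\frac1\pi\int_0^\infty s^{-1/2}(\HH+s)^{-1}\HH'(\HH+s)^{-1}\,ds .
\]
Each integrand has Hilbert--Schmidt norm at most $s^{-1/2}(c+s)^{-2}\|\HH'\|_{\rm HS}$, and this is integrable in $s$.
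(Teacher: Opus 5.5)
Your proposal is correct and follows the paper's proof. The positive lift $R=\HH^{-1/2}$ is an admissible gauge with $P_{m,R}=V_{m,\HH}$, so Theorem~\ref{thm:main-full} gives the first bound, and the second follows from Hilbert--Schmidt bounds on $R'$ and $R-I$ together with $\|R\|\le c^{-1/2}$. The differences are cosmetic: you get the $R-I$ bound by diagonalization and a Lipschitz estimate, and $R'$ by Daleckii--Krein, where the paper uses the integral representation of the inverse square root for both. Your splitting $RBR-B=(R-I)BR+B(R-I)$ and the explicit constant $\tfrac12c^{-3/2}$ simply make the paper's sketch precise.
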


\begin{proof}
The positive lift is admissible and its coefficient is
\eqref{eq:VmH}.  Under uniform ellipticity, the integral representation of
the inverse square root and its differentiated form give
\[
 \|R'\|_{\mathfrak S^2}\le C_c\|\HH'\|_{\mathfrak S^2},\qquad
 \|R-I\|_{\mathfrak S^2}\le C_c\|\HH-I\|_{\mathfrak S^2},
\]
where $\|\cdot\|_{\mathfrak S^2}$ denotes the Hilbert--Schmidt norm.
Together with $\|R\|\le c^{-1/2}$, these bounds control the square of
\eqref{eq:VmH} by the integrand in \eqref{eq:sobolev-bound}.
\end{proof}

\begin{corollary}[Zero-mass identity]\label{cor:zero-mass}
For every interval $I$,
\begin{equation}\label{eq:zero-mass}
 \mathfrak E_0(\HH;I)
 =\frac14\int_I
   \Tr\left[\left(\HH^{-1/2}\HH'\HH^{-1/2}\right)^2\right]dt.
\end{equation}
\end{corollary}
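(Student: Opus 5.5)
At $m=0$ the coefficient \eqref{eq:Pmtheta} reduces to $P_{0,\Theta}=Q_\Theta=-i\Theta^*J\Theta'$, because the mass term $m(\Theta^*B\Theta-B)$ disappears. So $\mathfrak E_0(\HH;I)$ is the infimum of $\int_I\Tr|Q_\Theta|^2\,dt$ over $\Theta\in\mathcal G(\HH;I)$ with $Q_\Theta\in L^2(I)$, and the identity should follow directly from Proposition~\ref{prop:horizontal}. The only point to check is the difference in admissible classes. The proposition takes the infimum over all locally absolutely continuous representing gauges. Definition \eqref{eq:Em} restricts to gauges with $Q_\Theta\in L^2(I)$ and assigns the value $+\infty$ when no such gauge exists. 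I will split the argument into two cases, according to whether the right-hand side $\frac14\int_I\Tr[(\HH^{-1/2}\HH'\HH^{-1/2})^2]\,dt$ is finite.

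\emph{Lower bound.} For any admissible $\Theta$, the proof of Proposition~\ref{prop:horizontal} gives pointwise a.e.
\[
\Tr|Q_\Theta|^2=\Tr|(Q_\Theta)_\parallel|^2+\Tr|(Q_\Theta)_\perp|^2\ge\tfrac14\Tr\left[\left(\HH^{-1/2}\HH'\HH^{-1/2}\right)^2\right].
\]
This uses Hilbert--Schmidt orthogonality of the two components together with the identity $\Theta^*\HH'\Theta=-2iJ(Q_\Theta)_\perp$. For the trace step I would write $\Tr[(\Theta^*\HH'\Theta)^2]=\Tr[(\HH'\Theta\Theta^*)^2]=\Tr[(\HH'\HH^{-1})^2]$, which equals the trace of the square of the conjugated matrix $\HH^{-1/2}\HH'\HH^{-1/2}$. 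Integrating gives $\mathfrak E_0\ge$ right-hand side. If the admissible set is empty, the inequality holds trivially since $\mathfrak E_0=+\infty$.

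\emph{Upper bound and attainment.} Start from the positive lift $R$ of Lemma~\ref{lem:positive-lift}, which lies in $\mathcal G(\HH;I)$. Pass to the horizontal gauge $\Theta=RU$, where $U$ solves $U'=-iJ(Q_R)_\parallel U$ with $U(t_0)=I$. Since $Q_R\in L^1_{\loc}$, this linear ODE has a locally absolutely continuous unitary solution commuting with $J$. By the gauge law \eqref{eq:gauge-law}, the parallel component then vanishes. The remaining perpendicular part is $U^*(Q_R)_\perp U$, which still anticommutes with $J$. Hence $\Tr|Q_\Theta|^2$ equals the integrand on the right a.e. If the right-hand side is finite, this $\Theta$ is admissible and attains the infimum. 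If it is infinite, the lower bound shows that no gauge has $Q_\Theta\in L^2$, so $\mathfrak E_0=+\infty$ by convention and the identity holds in the extended sense.

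I expect the main obstacle to be bookkeeping rather than analysis. One must verify that the horizontal gauge really belongs to $\mathcal G(\HH;I)$: it should be $J$-unitary and locally absolutely continuous, which holds because $U$ is unitary, commutes with $J$, and is locally absolutely continuous. One must also check that the trace identity holds a.e. for merely locally absolutely continuous $\HH$. Neither step involves the operator $D_m$, so the zero-mass case is simply the pointwise energy computation of Section~2.
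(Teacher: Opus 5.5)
Your proposal is correct and follows the paper's route. At $m=0$ you have $P_{0,\Theta}=Q_\Theta$, so the identity is Proposition~\ref{prop:horizontal}, and you unpack its two halves: the lower bound from the orthogonal splitting together with $\Theta^*\HH'\Theta=-2iJ(Q_\Theta)_\perp$, and attainment by the horizontal gauge. Your explicit handling of the $L^2$-admissibility restriction and the $+\infty$ convention in \eqref{eq:Em} is useful bookkeeping that the paper covers only by the phrase ``equality understood in the extended sense''.
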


\begin{proof}
At $m=0$, $P_{0,\Theta}=Q_\Theta$, so this is
Proposition~\ref{prop:horizontal}.
\end{proof}

\section{Half-line boundary conditions}
Let $I=\R_+$ and let $L\subset\C^{2n}$ be maximal $J$-isotropic:
$u^*Jv=0$ for all $u,v\in L$ and $\dim L=n$.  Such subspaces parametrize the
self-adjoint boundary conditions at the regular endpoint.  Denote by
$D^+_{m,L}+Q$ the operator with domain
\begin{equation}\label{eq:half-domain}
 \Dom(D^+_{m,L}+Q)=\{f\in H^1(\R_+;\C^{2n}):f(0)\in L\}.
\end{equation}

\begin{lemma}[Self-adjoint half-line realizations]\label{lem:halfline-sa}
For every Hermitian $Q\in L^2(\R_+;\C^{2n\times2n})$ and every maximal
$J$-isotropic $L$, the operator $D^+_{m,L}+Q$ is self-adjoint on
\eqref{eq:half-domain}.
\end{lemma}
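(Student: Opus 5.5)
The plan is to prove self-adjointness first for the free half-line operator $D^+_{m,L}$ and then to add $Q$ by Kato--Rellich, mirroring Proposition~\ref{prop:L2-Dirac}.

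\emph{Step 1: the free operator is symmetric.} Since $mB$ is a bounded Hermitian matrix, it suffices to treat $T_L=-iJ\,d/dt$ on $\Dom_L=\{f\in H^1(\R_+):f(0)\in L\}$. For $f,g\in H^1(\R_+)$, integration by parts (using $f,g\to0$ at infinity) gives
\[
 \langle T_Lf,g\rangle-\langle f,T_Lg\rangle=-i\,g(0)^*Jf(0)\cdot(\text{sign})\, ,
\]
that is, a boundary form proportional to $g(0)^*Jf(0)$. Isotropy of $L$ makes this vanish on $\Dom_L$, so $T_L$ is symmetric.

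\emph{Step 2: identify the adjoint.} The adjoint of $T_L$ is contained in the adjoint of its restriction to $C_c^\infty(0,\infty)$. That larger adjoint is the maximal operator $-iJ\,d/dt$ on $H^1(\R_+)$: its domain consists of the $g\in L^2$ with $Jg'\in L^2$, and since $J$ is invertible this is exactly $H^1$. The boundary form then shows that $g\in\Dom T_L^*$ if and only if $g(0)^*Jv=0$ for all $v\in L$. In other words, $g(0)\in (JL)^{\perp}=L^{[\perp]}$, the $J$-orthogonal complement of $L$.

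\emph{Step 3: maximality.} Because $J$ is nondegenerate, $\dim L^{[\perp]}=2n-\dim L=n$. By isotropy $L\subset L^{[\perp]}$, and equality of dimensions gives $L^{[\perp]}=L$. Surjectivity of the trace $f\mapsto f(0)$ from $H^1(\R_+)$ onto $\C^{2n}$ ensures that the characterization in Step~2 is sharp. Hence $T_L^*=T_L$, and $D^+_{m,L}=T_L+mB$ is self-adjoint on \eqref{eq:half-domain}.

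\emph{Step 4: add $Q$.} On the half-line we use the estimate
\[
 \|u\|_\infty^2\le 2\|u\|_2\|u'\|_2
 \quad\text{for } u\in H^1(\R_+).
\]
Together with $\|u'\|_2=\|T_Lu\|_2\le\|D^+_{m,L}u\|_2+m\|u\|_2$, this gives, for every $\varepsilon>0$,
\[
 \|Qu\|_2\le\varepsilon\|D^+_{m,L}u\|_2+C_\varepsilon\|u\|_2 .
\]
So $Q$ is infinitesimally $D^+_{m,L}$-bounded, and Kato--Rellich yields self-adjointness of $D^+_{m,L}+Q$ on the same domain.

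\emph{Main obstacle.} The delicate point is Step~2: showing that the adjoint's domain lies in $H^1$ and that the boundary condition it produces is exactly $L$. This rests on the dimension count $\dim L=n$ forcing $L^{[\perp]}=L$, which holds because $J$ has signature $(n,n)$. The remaining steps are routine.
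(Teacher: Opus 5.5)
Your proof is correct and follows essentially the same route as the paper: the integration-by-parts boundary form identifies the adjoint boundary space as the $J$-orthogonal complement of $L$, which equals $L$ by maximal isotropy, and $Q$ is then added by Kato--Rellich using the half-line Sobolev estimate. You supply details the paper leaves implicit: that the adjoint domain lies in $H^1$, the dimension count, and the explicit bound $\|u\|_\infty^2\le2\|u\|_2\|u'\|_2$. One small correction to your closing remark: $L^{[\perp]}=L$ needs only the nondegeneracy of $J$ and $\dim L=n$, while the signature $(n,n)$ is what guarantees that $n$-dimensional isotropic subspaces exist.
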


\begin{proof}
For $Q=0$, integration by parts gives
\[
 \langle D_m^+f,g\rangle-\langle f,D_m^+g\rangle
 =-i f(0)^*Jg(0).
\]
The adjoint boundary space is the $J$-orthogonal complement of $L$, which
equals $L$ by maximal isotropy.  Hence $D^+_{m,L}$ is self-adjoint.  The
half-line Sobolev estimate makes multiplication by $Q$ infinitesimally
$D^+_{m,L}$-bounded, so Kato--Rellich preserves the domain and
self-adjointness.
\end{proof}

Set
\begin{equation}\label{eq:Lsigma}
       L_\sigma=\Ker(B-\sigma I),\qquad \sigma\in\{+1,-1\}.
\end{equation}
These are maximal $J$-isotropic subspaces.

\begin{proposition}[Reflection-compatible boundaries]
\label{prop:reflection}
For Hermitian $Q\in L^2(\R_+;\C^{2n\times2n})$,
\begin{equation}\label{eq:reflection-bound}
 \mathcal M_{3/2}(D^+_{m,L_\sigma}+Q)
 \le 2C_n m^{1/2}\int_0^\infty\Tr|Q(t)|^2dt.
\end{equation}
\end{proposition}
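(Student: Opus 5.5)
Reflect the half-line problem to the full line. Define the reflection $(\mathcal R f)(t)=\sigma B f(-t)$ for $t<0$. The idea is that the boundary condition $f(0)\in L_\sigma=\Ker(B-\sigma I)$ is exactly the matching condition for a $B$-twisted even extension. The operator $D_m$ on the full line commutes with the involution
\[
(\mathcal S g)(t)=\sigma Bg(-t).
\]
To check this, note that $d/dt$ changes sign under $t\mapsto -t$, while $B(-iJ)B=iJ$ because $JB=-BJ$. Hence $\mathcal S(-iJ\,d/dt)\mathcal S=-iJ\,d/dt$, and $\mathcal S B\mathcal S=B$. Extend $Q$ to the full line by $\widetilde Q(t)=BQ(-t)B$ for $t<0$. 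Then $\widetilde Q$ is Hermitian, satisfies $\int_\R\Tr|\widetilde Q|^2=2\int_0^\infty\Tr|Q|^2$, and commutes with $\mathcal S$. So $D_m+\widetilde Q$, self-adjoint on $H^1(\R)$ by Proposition~\ref{prop:L2-Dirac}, commutes with $\mathcal S$ and reduces on the eigenspaces $\Ker(\mathcal S\mp1)$.

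Identify the $+1$ eigenspace with the half-line operator. The map $f\mapsto 2^{-1/2}(f\ \text{on}\ \R_+,\ \sigma Bf(-\cdot)\ \text{on}\ \R_-)$ sends $L^2(\R_+)$ unitarily onto $\Ker(\mathcal S-1)$. Its preimage of $H^1(\R)\cap\Ker(\mathcal S-1)$ is $\{f\in H^1(\R_+): f(0)=\sigma Bf(0)\}=\{f\in H^1(\R_+):f(0)\in L_\sigma\}$, since continuity at $0$ is equivalent to $B f(0)=\sigma f(0)$. On this space the full-line operator acts as $D^+_{m,L_\sigma}+Q$. Therefore $D^+_{m,L_\sigma}+Q$ is unitarily equivalent to the restriction of $D_m+\widetilde Q$ to the invariant subspace $\Ker(\mathcal S-1)$, with domains matching by Lemma~\ref{lem:halfline-sa}. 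Consequently the eigenvalues of the half-line operator in $(-m,m)$, counted with multiplicity, form a subset of those of $D_m+\widetilde Q$. Proposition~\ref{prop:matrix-dirac} then gives
\[
\mathcal M_{3/2}(D^+_{m,L_\sigma}+Q)\le\mathcal M_{3/2}(D_m+\widetilde Q)\le C_nm^{1/2}\cdot2\int_0^\infty\Tr|Q|^2\,dt.
\]
This is \eqref{eq:reflection-bound}.

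The main point to check carefully is the commutation computation, including the sign conventions: that $\mathcal S$ is a unitary involution (using $B^2=I$ and $\sigma^2=1$) and that it commutes with $-iJ\,d/dt$. The derivative flip combined with $BJB=-J$ makes this work. The remaining steps are routine bookkeeping: the domain identification of the reduced part, and the fact that the essential spectrum of the reduced part lies inside $(-\infty,-m]\cup[m,\infty)$, so its eigenvalues in the gap are discrete.
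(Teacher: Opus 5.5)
Your proposal is correct and follows the paper's proof. The paper uses the same $B$-twisted even extension $\widetilde Q$ and reduces $D_m+\widetilde Q$ by the involution $f\mapsto Bf(-\cdot)$; your $\mathcal S=\sigma\mathcal R$ has as its $+1$-eigenspace exactly the paper's $\sigma$-eigenspace of $\mathcal R$. The remaining steps also match: the moment of a reducing part is at most the full-line moment, and Proposition~\ref{prop:matrix-dirac} with $\|\widetilde Q\|_2^2=2\|Q\|_2^2$ finishes the bound. You additionally carry out the commutation and domain checks that the paper leaves implicit.
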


\begin{proof}
Extend $Q$ to $\R$ by
\[
 \widetilde Q(t)=Q(t)\quad(t>0),\qquad
 \widetilde Q(t)=BQ(-t)B\quad(t<0).
\]
The unitary involution $(\mathcal Rf)(t)=Bf(-t)$ commutes with
$D_m+\widetilde Q$.  Its $\sigma$-eigenspace is unitarily equivalent to the
half-line realization with boundary $L_\sigma$.  Hence its gap moment is no
larger than the full-line moment.  Apply Proposition~\ref{prop:matrix-dirac}
and use $\|\widetilde Q\|_2^2=2\|Q\|_2^2$.
\end{proof}

\begin{lemma}[Rank of a boundary resolvent difference]\label{lem:boundary-rank}
Let $Q=Q^*\in L^2(\R_+;\C^{2n\times2n})$ and let $L_0,L_1$ be maximal
$J$-isotropic subspaces.  For non-real $z$,
\begin{equation}\label{eq:boundary-rank}
 \operatorname{rank}\left((D^+_{m,L_1}+Q-z)^{-1}
 -(D^+_{m,L_0}+Q-z)^{-1}\right)\le n.
\end{equation}
\end{lemma}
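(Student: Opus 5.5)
The plan is to reduce the claim to the standard fact that two self-adjoint extensions of a common symmetric operator with finite deficiency indices have a resolvent difference of rank at most the deficiency index. Let
\[
 S=D_m^++Q,\qquad \Dom S=\{f\in H^1(\R_+;\C^{2n}):f(0)\in L_0\cap L_1\}.
\]
Both $A_j:=D^+_{m,L_j}+Q$ extend $S$. The proof then has four steps.

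First, I would record how the two resolvents differ. For $g\in L^2$, set
\[
 u=(A_1-z)^{-1}g-(A_0-z)^{-1}g .
\]
Then $u\in H^1(\R_+)$. Moreover, $(D_m^++Q-z)u=0$ almost everywhere, since the differential expression acts the same way on both domains. So $u$ lies in the space
\[
 \mathcal N_z=\{f\in H^1(\R_+):(D_m^++Q-z)f=0\}.
\]

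Second, I would show $\dim\mathcal N_z\le n$. A solution of the first-order system $-iJf'+mBf+Qf=zf$ with $Q\in L^2\subset L^1_{\loc}$ is determined by $f(0)$, so the space of all solutions is $2n$-dimensional. If $\mathcal N_z$ had dimension exceeding $n$, it would meet every maximal $J$-isotropic subspace nontrivially at the endpoint. In particular, some nonzero $f\in\mathcal N_z$ would have $f(0)\in L_0$. Then $f\in\Dom A_0$ and $(A_0-z)f=0$, which contradicts $z\notin\R$ and the self-adjointness of $A_0$ from Lemma~\ref{lem:halfline-sa}.

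The point needing care is this dimension count. I would make it precise as follows. The map $f\mapsto f(0)$ is injective on $\mathcal N_z$, so $\dim\mathcal N_z=\dim\{f(0):f\in\mathcal N_z\}$. If this image had dimension $>n$, then, since $\dim L_0=n$ in $\C^{2n}$, the image would intersect $L_0$ nontrivially, giving the eigenfunction above. This yields $\dim\mathcal N_z\le n$ directly, without Weyl limit-point theory.

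Third, the range of the resolvent difference lies in $\mathcal N_z$, by the first step. Hence its rank is at most $n$, which proves \eqref{eq:boundary-rank}. The only remaining technical point is that $H^1$ functions solving the ODE are absolutely continuous, so the endpoint value and the uniqueness of solutions of the initial value problem make sense. This holds because the coefficients are $L^1_{\loc}$.
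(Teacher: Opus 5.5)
Your proof is correct. It shares the paper's outer structure: the range of the resolvent difference lies in the space of decaying solutions of $(-iJ\,d/dt+mB+Q-z)u=0$, and $u\mapsto u(0)$ is injective there. The two routes diverge at the key bound $\dim\mathcal N_z\le n$.

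\emph{The paper's route.} It uses the Lagrange identity $\frac{d}{dt}\bigl(u^*Ju\bigr)=-2\operatorname{Im}z\,|u|^2$ together with a sequence $R_k\to\infty$ along which $u(R_k)\to0$. This shows that the initial values form a $J$-definite subspace. Such a subspace meets the $n$-dimensional eigenspace of $J$ of the opposite sign trivially, so its dimension is at most $n$.

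\emph{Your route.} You intersect the initial-value space with $L_0$ itself. You then use the self-adjointness of $A_0$ from Lemma~\ref{lem:halfline-sa} to exclude a non-real eigenvalue.

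\emph{What each buys.} Given that lemma, your argument is shorter and needs no limiting argument at infinity. It also works verbatim at any $z\in\rho(A_0)\cap\rho(A_1)$, including real points of the gap. So it gives the rank bound at every common resolvent point directly, without the resolvent-identity propagation used in Lemma~\ref{lem:finite-rank}. The paper's argument does not depend on Lemma~\ref{lem:halfline-sa}. It also records extra structure: the boundary values of $L^2$ solutions form a $J$-definite subspace, which is the usual starting point for Weyl-function arguments.

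One cosmetic point: the symmetric operator $S$ with $f(0)\in L_0\cap L_1$ and the appeal to deficiency indices are never used in what you actually prove, so you can drop them. If pursued, they would give the sharper bound $n-\dim(L_0\cap L_1)$, but that is not needed here.
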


\begin{proof}
The range of the resolvent difference is contained in the $L^2$ solution
space of
\[
 \left(-iJ\frac d{dt}+mB+Q-z\right)u=0.
\]
The map from this solution space to the initial value $u(0)$ is injective.  If
$\operatorname{Im}z>0$, the Lagrange identity and a sequence $R_k\to\infty$
with $u(R_k)\to0$ give
\[
 u(0)^*Ju(0)=2\operatorname{Im}z\int_0^\infty|u(t)|^2dt>0
 \quad(u\ne0).
\]
Thus the initial values form a $J$-positive subspace of $\C^{2n}$, whose
dimension is at most $n$.  The case $\operatorname{Im}z<0$ is analogous.
\end{proof}

\begin{proposition}[Half-line essential spectrum]
\label{prop:halfline-spectrum}
Let $Q=Q^*\in L^2(\R_+;\C^{2n\times2n})$ and let $L$ be maximal
$J$-isotropic.  Then
\begin{equation}\label{eq:halfline-essential}
 \spec_{\rm ess}(D^+_{m,L}+Q)
 =(-\infty,-m]\cup[m,\infty),
\end{equation}
and the spectrum in $(-m,m)$ is discrete.
\end{proposition}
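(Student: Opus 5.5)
Our plan is to reduce the statement to the full-line results already established, with a finite-rank boundary correction. First fix one reflection-compatible boundary, say $L_{+1}=\Ker(B-I)$. For this boundary we use the reflection construction from the proof of Proposition~\ref{prop:reflection}. Extend $Q$ to $\widetilde Q$ on $\R$ by $\widetilde Q(t)=BQ(-t)B$ for $t<0$. Then $\widetilde Q\in L^2(\R)$ is Hermitian, and Proposition~\ref{prop:L2-Dirac} gives
\[
\spec_{\rm ess}(D_m+\widetilde Q)=(-\infty,-m]\cup[m,\infty).
\]
The involution $\mathcal R f(t)=Bf(-t)$ commutes with $D_m+\widetilde Q$. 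Hence $D_m+\widetilde Q$ decomposes as the orthogonal sum of its restrictions to the $\pm1$ eigenspaces of $\mathcal R$. These restrictions are unitarily equivalent to $D^+_{m,L_{+1}}+Q$ and $D^+_{m,L_{-1}}+Q$. Restricting $f$ to $\R_+$ gives the unitary map, and the domain $H^1(\R)$ corresponds exactly to $H^1(\R_+)$ functions with $f(0)\in L_\sigma$. This is because $Bf(0)=\sigma f(0)$ is the matching condition for an $\mathcal R$-symmetric $H^1$ function.

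Since the essential spectrum of an orthogonal sum is the union of the essential spectra of the summands, we get
\[
\spec_{\rm ess}(D^+_{m,L_\sigma}+Q)\subset(-\infty,-m]\cup[m,\infty).
\]
We obtain the reverse inclusion for each $\sigma$ separately by a Weyl sequence argument. Take Weyl sequences for the free operator at any $|\lambda|\ge m$, namely modulated plane-wave solutions of $D_m u=\lambda u$ cut off smoothly far out on $\R_+$. Choose the support inside $[R,\infty)$ with $R\to\infty$. Then $\int_R^\infty\|Q\|^2\to0$, and together with $\|u\|_\infty\lesssim$ (normalization)$^{1/2}$ this makes $\|Qu_k\|\to0$. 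These functions vanish near $0$, so they lie in every half-line domain. This gives $(-\infty,-m]\cup[m,\infty)\subset\spec_{\rm ess}$ for every $L$ directly. An alternative is to argue that the sequence can be arranged to be neither purely $\mathcal R$-even nor odd, but the direct construction is cleaner.

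For an arbitrary maximal $J$-isotropic $L$, apply Lemma~\ref{lem:boundary-rank} with $L_0=L_{+1}$ and $L_1=L$. The resolvent difference at a non-real $z$ has rank at most $n$, so it is compact. The essential spectrum is invariant under compact resolvent perturbations (Weyl's theorem in resolvent form, e.g. via the spectral mapping $\lambda\mapsto(\lambda-z)^{-1}$). Hence $\spec_{\rm ess}(D^+_{m,L}+Q)=\spec_{\rm ess}(D^+_{m,L_{+1}}+Q)$. Discreteness of the spectrum in $(-m,m)$ is then simply the statement that this open gap contains no essential spectrum: any spectrum there consists of isolated eigenvalues of finite multiplicity.

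The main technical point is the identification of the $\sigma$-eigenspace restriction with the half-line operator, including the domain. We must check that an $\mathcal R$-symmetric $H^1(\R)$ function has trace satisfying $f(0)=\sigma Bf(0)$. Conversely, the even or odd extension of a half-line $H^1$ function with $f(0)\in L_\sigma$ must lie in $H^1(\R)$, since its one-sided traces at $0$ agree. We also verify that $\mathcal R$ commutes with $D_m+\widetilde Q$. This uses $BJB=-J$, so that $-iJ\,d/dt$ is preserved under $t\mapsto-t$ conjugated by $B$, together with $BB B=B$ and the definition of $\widetilde Q$. The Weyl sequence estimate is routine.
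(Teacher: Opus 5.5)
Your proof is correct, and its skeleton matches the paper's: reflection for $L_{\pm1}$, Weyl sequences pushed to the right, then Lemma~\ref{lem:boundary-rank} with Weyl's theorem in resolvent form for general $L$. The difference is where the potential enters.

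The paper first settles the free case $Q=0$ and applies Lemma~\ref{lem:boundary-rank} only at $Q=0$. It then adds $Q$ by proving directly that $Q(D^+_{m,L}-z)^{-1}$ is compact on the half-line. That argument uses that the resolvent maps into $H^1(\R_+)$, the compact embedding $H^1(0,R)\Subset C([0,R])$, and a tail bound by $\|Q\|_{L^2(R,\infty)}$.

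You instead carry $Q$ through the reflection via $\widetilde Q(t)=BQ(-t)B$. Your upper inclusion then comes from the full-line Proposition~\ref{prop:L2-Dirac}, i.e.\ the Kato--Seiler--Simon compactness. Your lower inclusion comes from Weyl sequences with $\|Qu_k\|_2\le\|Q\|_{L^2(R_k,\infty)}\|u_k\|_\infty\to0$. You then invoke Lemma~\ref{lem:boundary-rank} with the potential present, which the lemma permits.

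What each route buys:
\begin{itemize}
\item Your route saves a separate half-line compactness argument by recycling the full-line one.
\item The paper's route yields the slightly stronger fact that $Q$ is relatively compact with respect to every $D^+_{m,L}$, which is useful in its own right.
\end{itemize}

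You should state one point explicitly. Your normalized cut-off plane waves satisfy $\|u_k\|_\infty=O(\ell_k^{-1/2})$, and they converge weakly to zero because their supports escape to infinity. This is what makes them singular Weyl sequences. Your side remark about sequences that are neither $\mathcal R$-even nor odd is not needed.
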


\begin{proof}
For $L=L_\sigma$, the reflection construction in
Proposition~\ref{prop:reflection} realizes the free half-line operator as a
reducing part of the free full-line Dirac operator.  Compactly supported Weyl
sequences translated to the right show that every point of
$(-\infty,-m]\cup[m,\infty)$ belongs to the essential spectrum of this
reducing part; no point of $(-m,m)$ can belong to it because the direct sum of
the two reflection sectors is the free full-line operator.  Thus
\eqref{eq:halfline-essential} holds for $Q=0$ and $L=L_\sigma$.  It then holds
for every maximal $J$-isotropic $L$ by Lemma~\ref{lem:boundary-rank} and
Weyl's theorem.

Finally, $Q(D^+_{m,L}-z)^{-1}$ is compact for non-real $z$.  Indeed, the
resolvent maps $L^2$ boundedly into $H^1$; on a finite interval, the compact
embedding of $H^1(0,R)$ into $C([0,R])$ makes multiplication by the $L^2$ function $Q$
compact, while the norm of the tail multiplication operator composed with the resolvent is
bounded by $C\|Q\|_{L^2(R,\infty)}$.  Hence $Q$ is relatively compact and
\eqref{eq:halfline-essential} persists.  Discreteness in the open gap follows.
\end{proof}

\begin{lemma}[Finite-rank gap-moment comparison]\label{lem:finite-rank}
Let $A_0,A_1$ be self-adjoint operators with common essential gap $(-m,m)$.
If, for one common resolvent point, their resolvent difference has rank at most
$r$, then, for every $p>0$,
\begin{equation}\label{eq:finite-rank-general}
 \mathcal M_p(A_1)
 \le\mathcal M_p(A_0)+2rm^p,
 \qquad
 \mathcal M_p(A)=\sum_{\lambda\in\spec_{\rm d}(A)\cap(-m,m)}
                   (m-|\lambda|)^p.
\end{equation}
For $p=3/2$ this reads
\begin{equation}\label{eq:finite-rank}
 \mathcal M_{3/2}(A_1)
 \le\mathcal M_{3/2}(A_0)+2rm^{3/2}.
\end{equation}
\end{lemma}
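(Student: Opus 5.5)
The plan is to reduce the statement to eigenvalue interlacing for finite-rank perturbations and then to a crude count of how many eigenvalues can be created in the gap. Fix the common resolvent point $z$ (we may take it non-real, or real in the gap after perturbing slightly). Because $A_0$ and $A_1$ have a common essential gap, the counting functions $N_j(a,b)=N(A_j\in(a,b))$ are finite for every closed subinterval $[a,b]\subset(-m,m)$. The standard consequence of a rank-$r$ resolvent difference (via the Cayley transform, or by applying the min--max principle to the bounded operators $(A_j-z)^{-1}$ when $z$ is real, and to $\operatorname{Re}$ / unitary Cayley transforms otherwise) is
\[
 N(A_1\in(a,b))\le N(A_0\in(a,b))+2r
\]
for every interval $(a,b)$ in the gap. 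The rank-$r$ perturbation can shift counts by at most $r$ at each of the two endpoints, hence $2r$. I would prove this by the resolvent-difference Krein/Birman lemma: if $\dim\operatorname{Ran}\mathbf 1_{(a,b)}(A_1)>N_0+2r$, intersect that range with $\Ker\big((A_1-z)^{-1}-(A_0-z)^{-1}\big)$ to get a subspace of dimension $>N_0$ on which the two resolvents agree. A variational argument for a suitably chosen function $g$ of the resolvent, for which $(a,b)$ corresponds to $g>c$, then contradicts the count for $A_0$.

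Given this counting bound, apply it to the symmetric intervals $(-E,E)$ for $E\in(0,m)$. The layer-cake identity \eqref{eq:dirac-layer-cake}, generalized to $p$, reads
\[
 \mathcal M_p(A)=p\int_0^m(m-E)^{p-1}N(A\in(-E,E))\,dE .
\]
Inserting $N(A_1\in(-E,E))\le N(A_0\in(-E,E))+2r$ and integrating gives
\[
 \mathcal M_p(A_1)\le\mathcal M_p(A_0)+2r\,p\int_0^m(m-E)^{p-1}dE=\mathcal M_p(A_0)+2rm^p .
\]
Setting $p=3/2$ yields \eqref{eq:finite-rank}.

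The main obstacle is justifying the counting inequality with constant $2r$ for an interval inside the gap. With a semibounded operator this is only a half-line count, but here both endpoints matter. I would first handle a real $z\in(-m,m)\setminus(\sigma(A_0)\cup\sigma(A_1))$, if one exists. Under the Möbius map $\lambda\mapsto(\lambda-z)^{-1}$, the interval $(-E,E)$ maps to the complement of a bounded interval, or to an interval, in the spectrum of bounded self-adjoint operators differing by rank $r$. Weyl interlacing for rank-$r$ perturbations of bounded self-adjoint operators then costs at most $r$ per endpoint.

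For non-real $z$, note that $\operatorname{rank}\big((A_1-w)^{-1}-(A_0-w)^{-1}\big)\le r$ holds for all $w$ in the common resolvent set. This follows from the resolvent identity, since the difference at $w$ equals
\[
 \big(I+(w-z)(A_1-w)^{-1}\big)\big[(A_1-z)^{-1}-(A_0-z)^{-1}\big]\big(I+(w-z)(A_0-w)^{-1}\big).
\]
Choosing a real $w$ in the gap that avoids both discrete spectra then reduces this case to the real one.
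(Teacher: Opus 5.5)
Your proposal is correct and follows the paper's proof. Both arguments transport the rank bound to a real point of the gap via the resolvent identity, turn gap counts into counts of the bounded resolvents beyond their essential spectra at a cost of $r$ per endpoint, and integrate the resulting $2r$ bound with the layer-cake formula. The differences are only cosmetic: the paper picks the real point $c$ inside each interval $(a,b)$, so the image is always the complement of a bounded interval, whereas you fix one real $w$ and also handle images that are intervals on one side of the essential spectrum. Your preliminary ``kernel intersection plus a function $g$'' sketch is loose, but the M\"obius-map/interlacing argument you actually give supersedes it.
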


\begin{proof}
Let $z_0$ be a common resolvent point at which the assumed rank bound holds.
The resolvent identity implies, for every other common resolvent point $z$,
\begin{align*}
 &(A_1-z)^{-1}-(A_0-z)^{-1}\\
 &\quad=\bigl[I+(z-z_0)(A_1-z)^{-1}\bigr]
   \bigl[(A_1-z_0)^{-1}-(A_0-z_0)^{-1}\bigr]
   \bigl[I+(z-z_0)(A_0-z)^{-1}\bigr].
\end{align*}
Thus the resolvent difference has rank at most $r$ at every common resolvent
point.

We next prove a counting estimate.  Let $(a,b)$ be compactly contained in the
gap and choose $c\in(a,b)$ outside the discrete spectra of both operators.
Set $R_j=(A_j-c)^{-1}$.  If two bounded self-adjoint operators differ by an
operator of rank at most $r$, then their finite eigenvalue counts above a
threshold lying strictly above the relevant essential spectrum differ by at
most $r$.  To see this,
intersect the relevant spectral subspace with the kernel of the difference and
apply the min--max principle.  The reverse inequality follows after
interchanging the operators.  The analogous assertion below a threshold lying
strictly below the relevant essential spectrum follows by changing signs.

Eigenvalues of $A_j$ in $(c,b)$ correspond under
$\lambda\mapsto(\lambda-c)^{-1}$ to eigenvalues of $R_j$ above
$(b-c)^{-1}$, whereas eigenvalues in $(a,c)$ correspond to eigenvalues of
$R_j$ below $(a-c)^{-1}$.  Both thresholds lie outside the essential spectra
of the resolvents.  Therefore
\begin{equation}\label{eq:count-rank-two}
 \left|N(A_1\in(a,b))-N(A_0\in(a,b))\right|\le2r.
\end{equation}
Possible eigenvalues at the endpoints and at $c$ are handled by monotone
approximation.

Finally, the layer-cake identity
\[
 \mathcal M_p(A)
 =p\int_0^m t^{p-1}N\bigl(A\in(-m+t,m-t)\bigr)dt
\]
and \eqref{eq:count-rank-two} give
\[
 \mathcal M_p(A_1)-\mathcal M_p(A_0)
 \le2rp\int_0^m t^{p-1}dt=2rm^p.
\]
\end{proof}

For a canonical boundary subspace $\Lambda$ in the variable $y(0)$, a gauge
$\Theta$ transports it to
\begin{equation}\label{eq:boundary-transport-short}
       L_\Theta=\Theta(0)^{-1}\Lambda.
\end{equation}
Define the half-line energy by \eqref{eq:Em} with $I=\R_+$ and, for
$\sigma=\pm1$,
\begin{equation}\label{eq:compatible-energy-short}
 \mathfrak E_{m,\sigma}(\HH,\Lambda)
 =\inf_{\substack{\Theta\in\mathcal G(\HH;\R_+)\\
                   \Theta(0)^{-1}\Lambda=L_\sigma\\
                   P_{m,\Theta}\in L^2}}
   \int_0^\infty\Tr|P_{m,\Theta}|^2dt.
\end{equation}

\begin{lemma}[Non-emptiness of the compatible gauge class]
\label{lem:compatible-nonempty}
If $\mathfrak E_m(\HH;\R_+)<\infty$, then, for every maximal
$J$-isotropic $\Lambda$ and either sign $\sigma$,
\[
  \mathfrak E_{m,\sigma}(\HH,\Lambda)<\infty.
\]
\end{lemma}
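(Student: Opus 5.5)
Starting from an admissible gauge $\Theta_0\in\mathcal G(\HH;\R_+)$ with $P_{m,\Theta_0}\in L^2(\R_+)$, which exists because $\mathfrak E_m(\HH;\R_+)<\infty$, I will modify it by a right factor $W$ that is unitary and commutes with $J$. By the gauge law \eqref{eq:P-gauge-law}, $\Theta_1=\Theta_0W$ again represents $\HH$. The new boundary subspace is $\Theta_1(0)^{-1}\Lambda=W(0)^*L_{\Theta_0}$, where $L_{\Theta_0}=\Theta_0(0)^{-1}\Lambda$. This subspace is maximal $J$-isotropic, because $\Theta_0(0)$ is $J$-unitary.

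The first step is linear algebra: show that the group $\mathcal U_J$ of unitaries commuting with $J$ acts transitively on maximal $J$-isotropic subspaces. In the normalization \eqref{eq:standard-JB}, every maximal $J$-isotropic subspace is the graph $\{(x,Vx):x\in\C^n\}$ of a unitary $V$ on $\C^n$. The group $\mathcal U_J$ consists of the matrices $\operatorname{diag}(U_1,U_2)$, and such a matrix sends the graph of $V$ to the graph of $U_2VU_1^*$. Moreover, $L_\sigma$ is the graph of $\sigma I_n$. Hence there is a constant $W_0\in\mathcal U_J$ with $W_0^*L_{\Theta_0}=L_\sigma$.

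The second step is to keep the coefficient in $L^2$. A constant $W\equiv W_0$ will not do, because then $m(W_0^*BW_0-B)$ is a nonzero constant, which is not in $L^2(\R_+)$. Instead I take $W(t)$ with $W(0)=W_0$, $W(t)=I$ for $t\ge1$, and $W$ smooth and $\mathcal U_J$-valued on $[0,1]$. Such a path exists because $\mathcal U_J\cong U(n)\times U(n)$ is path-connected. With this choice, \eqref{eq:P-gauge-law} gives
\[
P_{m,\Theta_1}=W^*P_{m,\Theta_0}W-iW^*JW'+m(W^*BW-B).
\]
The first term lies in $L^2$ since $W$ is unitary. The last two terms are bounded and supported in $[0,1]$. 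Therefore $P_{m,\Theta_1}\in L^2(\R_+)$, and $\Theta_1$ is locally absolutely continuous. It follows that $\mathfrak E_{m,\sigma}(\HH,\Lambda)\le\int_0^\infty\Tr|P_{m,\Theta_1}|^2<\infty$.

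The main obstacle is the transitivity claim in the first step, specifically the graph description of maximal $J$-isotropic subspaces. Isotropy gives $|x_1|=|x_2|$ for each $(x_1,x_2)\in L$, so the projection of $L$ onto the first component is injective. Combined with $\dim L=n$, this projection is onto $\C^n$, so $L$ is the graph of an isometry $V$, and $V$ is unitary because it is an isometry of $\C^n$. I would carry this out after the fixed unitary change of basis to \eqref{eq:standard-JB}, which preserves all the notions involved.
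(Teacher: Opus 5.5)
Your proposal is correct and follows essentially the same route as the paper. You use transitivity of the unitaries commuting with $J$ on maximal $J$-isotropic subspaces, via their description as graphs of unitaries. You then join $W_0$ to the identity on a finite interval, so that by \eqref{eq:P-gauge-law} the extra terms are bounded and compactly supported. Your write-up additionally supplies details the paper leaves implicit: the explicit action $V\mapsto U_2VU_1^*$, the identification of $L_\sigma$ as the graph of $\sigma I_n$, and the reason a constant $W_0$ would not keep the coefficient in $L^2$.
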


\begin{proof}
The compact group of unitaries commuting with $J$ acts transitively on the
maximal $J$-isotropic subspaces.  In the standard form
\eqref{eq:standard-JB}, every such subspace is the graph of a unitary
$K:\C^n\to\C^n$, which makes the transitivity explicit.  Start with an
admissible gauge $\Theta_0$.  Choose a unitary $U_0$ commuting with $J$ so that
$U_0^{-1}\Theta_0(0)^{-1}\Lambda=L_\sigma$, and connect $U_0$ smoothly to the
identity inside this compact group on a finite interval.  Taking
$\Theta=\Theta_0U$, the transformation law \eqref{eq:P-gauge-law} shows that
the new terms are bounded and compactly supported.  Hence
$P_{m,\Theta}\in L^2$, and the required boundary condition holds.
\end{proof}

\begin{theorem}[Main half-line theorem]\label{thm:main-half}
Let $m>0$, let $\HH\in\AC_{\loc}(\R_+)$ be positive definite with
$\HH J\HH=J$, and let $\Lambda$ be maximal $J$-isotropic.  Assume that
$\mathfrak E_m(\HH;\R_+)<\infty$.  Then
\begin{equation}\label{eq:canonical-half-essential}
 \spec_{\rm ess}(\mathcal A^+_{m,\HH,\Lambda})
 =(-\infty,-m]\cup[m,\infty),
\end{equation}
and
\begin{equation}\label{eq:main-half}
 \mathcal M_{3/2}(\mathcal A^+_{m,\HH,\Lambda})
 \le 2C_n m^{1/2}\mathfrak E_m(\HH;\R_+)+2n m^{3/2}.
\end{equation}
For either $\sigma=\pm1$,
\begin{equation}\label{eq:main-half-compatible}
 \mathcal M_{3/2}(\mathcal A^+_{m,\HH,\Lambda})
 \le 2C_n m^{1/2}\mathfrak E_{m,\sigma}(\HH,\Lambda).
\end{equation}
\end{theorem}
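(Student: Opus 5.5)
The plan is to reduce everything, via Lemma~\ref{lem:gauge-reduction}, to half-line Dirac operators $D^+_{m,L}+Q$ with $Q=P_{m,\Theta}\in L^2(\R_+)$. Here $\mathcal A^+_{m,\HH,\Lambda}$ denotes the transported half-line realization with canonical boundary $\Lambda$. By that lemma it is gauge independent and unitarily equivalent, through $U_\Theta$, to $D^+_{m,L_\Theta}+P_{m,\Theta}$ with $L_\Theta=\Theta(0)^{-1}\Lambda$. Since $\Theta(0)$ is $J$-unitary, $L_\Theta$ is again maximal $J$-isotropic. Unitary equivalence preserves the essential spectrum and the gap moments. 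Hence \eqref{eq:canonical-half-essential} follows at once from Proposition~\ref{prop:halfline-spectrum}, applied with any admissible gauge; one exists because $\mathfrak E_m(\HH;\R_+)<\infty$.

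For the compatible bound \eqref{eq:main-half-compatible}, fix $\sigma$. By Lemma~\ref{lem:compatible-nonempty} the infimum defining $\mathfrak E_{m,\sigma}(\HH,\Lambda)$ runs over a nonempty class. For any $\Theta$ in that class we have $L_\Theta=L_\sigma$. Proposition~\ref{prop:reflection} then gives
\[
\mathcal M_{3/2}(\mathcal A^+_{m,\HH,\Lambda})=\mathcal M_{3/2}(D^+_{m,L_\sigma}+P_{m,\Theta})\le 2C_nm^{1/2}\int_0^\infty\Tr|P_{m,\Theta}|^2dt.
\]
Taking the infimum over the class yields \eqref{eq:main-half-compatible}.

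For the general bound \eqref{eq:main-half}, take an arbitrary admissible gauge $\Theta$ with $P_{m,\Theta}\in L^2(\R_+)$, and set $Q=P_{m,\Theta}$ and $L=L_\Theta$. We compare $A_1=D^+_{m,L}+Q$ with $A_0=D^+_{m,L_+}+Q$, which has the same coefficient. By Proposition~\ref{prop:halfline-spectrum} both operators have essential gap $(-m,m)$. Lemma~\ref{lem:boundary-rank} bounds the rank of their resolvent difference at a nonreal point by $n$. Lemma~\ref{lem:finite-rank} with $r=n$ and $p=3/2$ then gives $\mathcal M_{3/2}(A_1)\le\mathcal M_{3/2}(A_0)+2nm^{3/2}$. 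Proposition~\ref{prop:reflection} bounds $\mathcal M_{3/2}(A_0)$ by $2C_nm^{1/2}\int_0^\infty\Tr|Q|^2$. Taking the infimum over admissible $\Theta$ gives \eqref{eq:main-half}.

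The argument is essentially an assembly of earlier results, so the main obstacle is bookkeeping rather than analysis. We must check that $L_\Theta$ is maximal $J$-isotropic: $(\Theta(0)^{-1}u)^*J(\Theta(0)^{-1}v)=u^*Jv$, because $\Theta(0)^{-1}$ is $J$-unitary. We must also make sure the common resolvent point required by Lemma~\ref{lem:finite-rank} can be taken nonreal, which is exactly where Lemma~\ref{lem:boundary-rank} supplies the rank bound.
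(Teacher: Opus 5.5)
Your proposal is correct and follows the paper's own proof: you reduce to $D^+_{m,L_\Theta}+P_{m,\Theta}$ by gauge transport, get the essential spectrum from Proposition~\ref{prop:halfline-spectrum}, obtain \eqref{eq:main-half} by the rank-$n$ comparison with $L_+$ (Lemmas~\ref{lem:boundary-rank} and~\ref{lem:finite-rank}) combined with Proposition~\ref{prop:reflection}, and obtain \eqref{eq:main-half-compatible} from Proposition~\ref{prop:reflection} directly under the constraint $L_\Theta=L_\sigma$. Your explicit checks that $L_\Theta$ is maximal $J$-isotropic and that both operators share the gap $(-m,m)$ are bookkeeping the paper leaves implicit; the appeal to Lemma~\ref{lem:compatible-nonempty} is harmless but not strictly needed, since an empty class makes the right-hand side $+\infty$.
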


\begin{proof}
For a fixed admissible gauge, the half-line version of
Lemma~\ref{lem:gauge-reduction} identifies the canonical operator with
$D^+_{m,L_\Theta}+P_{m,\Theta}$.  Proposition~\ref{prop:halfline-spectrum}
gives \eqref{eq:canonical-half-essential}.  By Lemma~\ref{lem:boundary-rank},
changing $L_\Theta$ to $L_+$ gives a resolvent difference of rank at most $n$.
Proposition~\ref{prop:reflection} and Lemma~\ref{lem:finite-rank} therefore
give
\[
 \mathcal M_{3/2}(\mathcal A^+_{m,\HH,\Lambda})
 \le 2C_n m^{1/2}\int_0^\infty\Tr|P_{m,\Theta}|^2dt+2n m^{3/2}.
\]
Taking the infimum proves \eqref{eq:main-half}.  Under the compatibility
constraint $L_\Theta=L_\sigma$, Proposition~\ref{prop:reflection} applies
directly and gives \eqref{eq:main-half-compatible}.
\end{proof}

\begin{remark}[The boundary contribution cannot be removed uniformly]
In the scalar $2\times2$ free case with \eqref{eq:standard-JB}, write a
maximal isotropic boundary line as $u_2(0)=e^{i\theta}u_1(0)$.  A decaying
solution exists precisely when
\[
 e^{i\theta}=\frac{\lambda-i\sqrt{m^2-\lambda^2}}{m}.
\]
For $\theta\in(-\pi,0)$ this gives the gap eigenvalue
$\lambda=m\cos\theta$.  Thus a boundary-uniform absolute moment estimate
cannot have a right side that vanishes with the distributed coefficient.
\end{remark}

\section{A hyperbolic-rotation example}
Take $n=1$ and the matrices in \eqref{eq:standard-JB}.  Let
$a\in H^1(\R)\cap L^\infty(\R)$ be real-valued and set
\begin{equation}\label{eq:example-theta-H}
 \Theta_a(t)=e^{a(t)B},\qquad
 \HH_a(t)=e^{-2a(t)B}.
\end{equation}
Then $\Theta_a$ is positive and $J$-unitary, and $\HH_a J\HH_a=J$.
With
$\sigma_2=\left(\begin{smallmatrix}0&-i\\i&0\end{smallmatrix}\right)$,
\begin{equation}\label{eq:example-P}
 P_{m,\Theta_a}
 =a'\sigma_2+m\left(\sinh(2a)I+(\cosh(2a)-1)B\right).
\end{equation}
The three displayed matrices are Hilbert--Schmidt orthogonal, and therefore
\begin{equation}\label{eq:example-energy-exact}
 \Tr|P_{m,\Theta_a}|^2
 =2|a'|^2+2m^2\left(\sinh^2(2a)+(\cosh(2a)-1)^2\right).
\end{equation}
Consequently,
\begin{equation}\label{eq:example-moment}
 \mathcal M_{3/2}(\mathcal A_{m,\HH_a})
 \le C_1m^{1/2}\int_\R
 \left[2|a'|^2+2m^2\bigl(\sinh^2(2a)+(\cosh(2a)-1)^2\bigr)\right]dt.
\end{equation}

For the concrete profile $a(t)=A\operatorname{sech}(t/L)$, $A,L>0$, use
\[
 \sinh^2(2a)+(\cosh(2a)-1)^2
 =4\cosh(2a)\sinh^2(a)
 \le4\cosh(4A)a^2.
\]
Here $|\sinh a|\le |a|\cosh A$ and
$\cosh(2A)\cosh^2 A\le\cosh(4A)$.
Since
$\int|a'|^2=2A^2/(3L)$ and $\int|a|^2=2A^2L$, we obtain
\begin{equation}\label{eq:example-sech}
 \mathcal M_{3/2}(\mathcal A_{m,\HH_a})
 \le C_1m^{1/2}A^2
 \left(\frac4{3L}+16m^2L\cosh(4A)\right).
\end{equation}
This is a directly evaluable estimate for a non-constant Hamiltonian, without
first recasting the model as a Dirac potential.

\section{Scalar reduction of the massive gauge infimum}
\label{sec:scalar-gauge-reduction}
The preceding example also makes the massive optimization problem explicit.
This is useful because the positive lift need not minimize the energy when
$m>0$.

\begin{proposition}[Exact phase reduction]\label{prop:phase-reduction}
Let $\HH_a=e^{-2aB}$ with real
$a\in H^1(\R)\cap L^\infty(\R)$.  Then
\begin{align}
 \mathfrak E_m(\HH_a;\R)
 =2\inf_{\delta}\int_\R &\Bigl[
   (\delta'+m\sinh(2a))^2 \notag\\
 &+\bigl(m\cosh(2a)\cos(2\delta)-a'\sin(2\delta)-m\bigr)^2 \notag\\
 &+\bigl(m\cosh(2a)\sin(2\delta)+a'\cos(2\delta)\bigr)^2
 \Bigr]dt,\label{eq:phase-functional}
\end{align}
where the infimum is over real-valued phase lifts in $H^1_{\loc}(\R)$ for which
the displayed integral is finite.  For a fixed $\delta$, let
$\mathcal E_a[\delta]$ denote twice the displayed integral.  Every critical phase satisfies, in the weak sense,
\begin{equation}\label{eq:phase-EL}
 \delta''+2ma'\cosh(2a)
 -2m^2\cosh(2a)\sin(2\delta)
 -2ma'\cos(2\delta)=0.
\end{equation}
The positive lift corresponds to $\delta=0$.  It is stationary if and only if
\begin{equation}\label{eq:positive-not-stationary}
       a'(\cosh(2a)-1)=0\quad\text{a.e.}
\end{equation}
If \eqref{eq:positive-not-stationary} fails, a compactly supported phase
perturbation strictly lowers the positive-lift energy.
\end{proposition}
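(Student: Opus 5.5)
We parametrize the gauge fibre explicitly and then compute everything in the orthogonal basis $I,J,B,\sigma_2$ of $\C^{2\times2}$. By the proof of Lemma~\ref{lem:gauge-reduction}, every $\Theta\in\mathcal G(\HH_a;\R)$ has the form $\Theta=\Theta_aW$ with $W$ unitary, locally absolutely continuous, and commuting with $J$. For $n=1$ such $W$ is diagonal, so we write $W=e^{i\gamma}e^{i\delta J}$ with real $\gamma,\delta\in\AC_{\loc}$ (continuous phase lifts).

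\textbf{Step 1 (coefficient in a general gauge).} Apply \eqref{eq:P-gauge-law} with \eqref{eq:example-P}. Since $U=e^{i\delta J}$ gives $U'=i\delta'JU$, the scalar factor $e^{i\gamma}$ contributes the term $\gamma'J$, and $e^{i\delta J}$ contributes $\delta' I$. Conjugation by $e^{i\delta J}$ fixes $I$ and $J$ and rotates the plane spanned by $B$ and $\sigma_2$ by the angle $2\delta$: we have $e^{-i\delta J}Be^{i\delta J}=Be^{2i\delta J}=\cos(2\delta)B+\sin(2\delta)\,\cdot$(a multiple of $\sigma_2$, sign to be fixed by the convention for $\sigma_2$), and similarly for $\sigma_2$. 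Collecting terms,
\[
P_{m,\Theta}=\gamma'J+(\delta'+m\sinh 2a)I+c_B B+c_2\sigma_2 .
\]
Here $c_B$ and $c_2$ are precisely the second and third brackets in \eqref{eq:phase-functional}; the $-m$ in $c_B$ comes from the $-mB$ in $m(\Theta^*B\Theta-B)$.

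\textbf{Step 2 (eliminating $\gamma$ and reducing to the phase functional).} By Hilbert--Schmidt orthogonality and $\Tr X^2=2$ for $X\in\{I,J,B,\sigma_2\}$, the energy equals
\[
2\int_\R\left[\gamma'^2+(\delta'+m\sinh2a)^2+c_B^2+c_2^2\right]dt .
\]
The $\gamma$ term is nonnegative and decouples from the others, so the infimum is attained with $\gamma$ constant. Regularity matches the statement: $a\in H^1\cap L^\infty$ gives $\sinh 2a\in L^2$, so finiteness of the integral forces $\delta'\in L^2$, hence $\delta\in H^1_{\loc}$. Conversely, every such $\delta$ yields an admissible gauge. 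This proves \eqref{eq:phase-functional}.

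\textbf{Step 3 (Euler--Lagrange equation and stationarity of the positive lift).} Differentiate $\mathcal E_a[\delta+\varepsilon\eta]$ at $\varepsilon=0$ for $\eta\in C_c^\infty$. Expanding the derivatives of $c_B^2+c_2^2$ and using $\sin^2+\cos^2=1$, the cross terms collapse. Integrating the term $2(\delta'+m\sinh2a)\eta'$ by parts gives the factor $2ma'\cosh 2a$, which yields \eqref{eq:phase-EL} weakly. The positive lift is $\delta=0$, i.e. $W=I$. Setting $\delta=0$ in the first variation gives
\[
\frac{d}{d\varepsilon}\mathcal E_a[\varepsilon\eta]\Big|_{0}=8m\int_\R a'(1-\cosh2a)\,\eta\,dt ,
\]
obtained by combining $2(m\cosh2a-m)(-2a')$, $2a'(2m\cosh 2a)$, and $-4ma'\cosh 2a$ from the integration by parts. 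Hence $\delta=0$ is stationary if and only if \eqref{eq:positive-not-stationary} holds.

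\textbf{Step 4 (strict decrease).} If \eqref{eq:positive-not-stationary} fails, choose $\eta\in C_c^\infty$ with $\int a'(1-\cosh2a)\eta\neq0$. This is possible because $a'(1-\cosh 2a)\in L^1_{\loc}$ is not zero a.e. Fix the sign of $\eta$ so that the derivative is negative. The second derivative of $\varepsilon\mapsto\mathcal E_a[\varepsilon\eta]$ is bounded uniformly for $|\varepsilon|\le1$: the integrand is smooth in $(\delta,\delta')$, and its coefficients $a'$, $\cosh 2a$, $\sinh 2a$ are controlled on $\operatorname{supp}\eta$ by $a\in H^1\cap L^\infty$. Taylor's formula then gives $\mathcal E_a[\varepsilon\eta]<\mathcal E_a[0]$ for small $\varepsilon>0$.

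The main obstacle is Step 1, in particular the sign bookkeeping when $e^{i\delta J}$ rotates $B$ and $\sigma_2$ into each other. The check is that the resulting components reproduce exactly the combinations $m\cosh(2a)\cos(2\delta)-a'\sin(2\delta)-m$ and $m\cosh(2a)\sin(2\delta)+a'\cos(2\delta)$.
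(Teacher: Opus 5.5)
Your proposal is correct and follows the paper's proof essentially step for step: the same factorization $\Theta=\Theta_a e^{i\gamma}e^{i\delta J}$, the same conjugation identities, Hilbert--Schmidt orthogonality of $I,J,B,\sigma_2$, and elimination of the common phase by a constant. It also uses the same first variation $8m\int a'(1-\cosh 2a)\eta\,dt$ at $\delta=0$; your Taylor argument in Step 4 and the regularity remark just make explicit what the paper leaves implicit. The sign you left open in Step 1 is fixed by the paper's convention $\sigma_2=-iJB$. This gives $e^{-i\delta J}Be^{i\delta J}=\cos(2\delta)B+\sin(2\delta)\sigma_2$ and $e^{-i\delta J}\sigma_2e^{i\delta J}=\cos(2\delta)\sigma_2-\sin(2\delta)B$, which produce exactly the two brackets in the phase functional.
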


\begin{proof}
Every gauge representing $\HH_a$ is $\Theta_a U$, where $U$ is unitary and
commutes with $J$.  Since the line is simply connected, write
\[
 U=e^{i\alpha}e^{i\delta J}
  =\operatorname{diag}(e^{i(\alpha+\delta)},e^{i(\alpha-\delta)})
\]
with locally $H^1$ real phases.  The Pauli-matrix identities give
\[
 U^*BU=\cos(2\delta)B+\sin(2\delta)\sigma_2,
 \qquad
 U^*\sigma_2U=\cos(2\delta)\sigma_2-\sin(2\delta)B,
\]
and
$-iU^*JU'=\alpha'J+\delta'I$.  Substitution into the gauge law
\eqref{eq:P-gauge-law} and Hilbert--Schmidt orthogonality of
$I,J,B,\sigma_2$ yield
\begin{align*}
 \Tr|P_{m,\Theta_aU}|^2=2\Bigl[&(\alpha')^2
 +(\delta'+m\sinh(2a))^2\\
 &+\bigl(m\cosh(2a)\cos(2\delta)-a'\sin(2\delta)-m\bigr)^2\\
 &+\bigl(m\cosh(2a)\sin(2\delta)+a'\cos(2\delta)\bigr)^2\Bigr].
\end{align*}
The common phase contributes only $2\int(\alpha')^2$ and is minimized by a
constant.  This proves \eqref{eq:phase-functional}.  The first variation with
respect to compactly supported phase perturbations gives
\eqref{eq:phase-EL}.  At $\delta=0$, the first variation in the direction
$\eta$ is
$8m\int a'(1-\cosh(2a))\eta\,dt$.  Hence
\eqref{eq:positive-not-stationary} is the stationarity condition, and otherwise
a perturbation of one sign lowers the energy.
\end{proof}

\begin{theorem}[A principal-sector minimizer]\label{thm:phase-minimizer}
Under the assumptions of Proposition~\ref{prop:phase-reduction}, let
\[
 \mathcal K=\{\delta\in H^1(\R): |\delta(t)|\le\pi/2
                    \text{ for a.e. }t\}.
\]
The phase functional in \eqref{eq:phase-functional} attains its minimum on
$\mathcal K$.  If
\begin{equation}\label{eq:strict-phase-condition}
 a'(\cosh(2a)-1)\not\equiv0,
\end{equation}
then every minimizer is nonzero and its energy is strictly smaller than the
positive-lift energy.  If a minimizer $\delta_*$ satisfies
$\|\delta_*\|_\infty<\pi/2$, then it is a weak solution of
\eqref{eq:phase-EL}.
\end{theorem}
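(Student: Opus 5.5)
The plan is the direct method in the calculus of variations on $\mathcal K$, followed by a first-variation argument for strictness and a Lagrange-multiplier-free Euler--Lagrange derivation in the interior case.

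\textbf{Expanding the integrand.} Write $\mathcal E_a[\delta]=2\int F(t,\delta,\delta')\,dt$ with
\[
 F=(\delta'+m\sinh(2a))^2+\bigl(m\cosh(2a)\cos(2\delta)-a'\sin(2\delta)-m\bigr)^2+\bigl(m\cosh(2a)\sin(2\delta)+a'\cos(2\delta)\bigr)^2 .
\]
Expanding the last two squares gives
\[
 F=(\delta'+m\sinh(2a))^2+m^2\cosh^2(2a)+|a'|^2+m^2-2m\bigl(\cosh(2a)\cos(2\delta)-a'\sin(2\delta)\bigr).
\]
Regroup this as
\[
 (\delta'+m\sinh 2a)^2+|a'|^2+m^2(\cosh 2a-1)^2+2m\cosh(2a)\bigl(1-\cos 2\delta\bigr)+2ma'\sin 2\delta .
\]

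\textbf{Coercivity and existence.} Since $a\in H^1\cap L^\infty$, all $a$-only terms are integrable, because $\cosh 2a-1\le Ca^2$ and $\sinh 2a\in L^2$. On $\mathcal K$ the quantity $1-\cos2\delta=2\sin^2\delta\ge c\delta^2$, since $|\delta|\le\pi/2$. Moreover $|2ma'\sin2\delta|\le \varepsilon\delta^2+C_\varepsilon|a'|^2$, and $\cosh 2a\ge1$. So $\mathcal E_a[\delta]\ge c'\|\delta\|_{H^1}^2-C$, after bounding $\|\delta'\|_2^2\le 2\|\delta'+m\sinh2a\|_2^2+C$, and $\mathcal E_a$ is finite on $\mathcal K$.

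Take a minimizing sequence in $\mathcal K$. It is bounded in $H^1$, so a subsequence converges weakly in $H^1$ and locally uniformly; the constraint is preserved, so $\mathcal K$ is weakly closed. For lower semicontinuity:
\begin{itemize}
 \item the term $\int(\delta'+m\sinh2a)^2$ is convex in $\delta'$ and hence weakly lsc;
 \item the term $\int\cosh(2a)(1-\cos2\delta)\ge0$ is handled by Fatou under pointwise convergence;
 \item the linear-type term $\int a'\sin2\delta$ converges, since $\sin 2\delta_k\to\sin2\delta$ weakly in $L^2$ (bounded in $L^2$, pointwise convergent) and $a'\in L^2$.
\end{itemize}
This gives a minimizer. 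The \emph{main obstacle} is this noncompactness on $\R$: the cross term must pass to the limit without compactness at infinity, and the weak-$L^2$ argument above is what handles it.

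\textbf{Strictness.} Since $0\in\mathcal K$, Proposition~\ref{prop:phase-reduction} shows that under \eqref{eq:strict-phase-condition} the first variation at $0$ in a compactly supported direction $\eta$ is nonzero. The perturbation $\pm s\eta$ with small $s$ stays in $\mathcal K$, because $\eta$ is bounded, and for the right sign lowers the energy. Hence $\min<\mathcal E_a[0]$, so no minimizer is $0$, and every minimizer has energy strictly below the positive-lift energy.

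\textbf{Euler--Lagrange equation.} If $\|\delta_*\|_\infty<\pi/2$, then $\delta_*+s\eta\in\mathcal K$ for all $\eta\in C_c^\infty$ and $|s|$ small. The function $s\mapsto\mathcal E_a[\delta_*+s\eta]$ is differentiable, by dominated convergence using boundedness of the trigonometric derivatives, and is minimized at $s=0$. Its derivative at $0$ is exactly the weak form of \eqref{eq:phase-EL}, as computed in Proposition~\ref{prop:phase-reduction}.
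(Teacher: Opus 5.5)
Your proposal is correct and follows essentially the same route as the paper. Both arguments rest on the difference identity $\tfrac12(\mathcal E_a[\delta]-\mathcal E_a[0])=\int(\delta'^2+2m\sinh(2a)\delta'+4m^2\cosh(2a)\sin^2\delta+2ma'\sin(2\delta))\,dt$. From it you get coercivity on $\mathcal K$, then the direct method, using Fatou for the $\sin^2\delta$ term and a separate limit passage for the $a'\sin(2\delta)$ term. Your weak-$L^2$ convergence of $\sin(2\delta_k)$ is a clean substitute for the paper's local convergence plus tail estimate. The proof then concludes with the first variation at $0$ and at an interior minimizer.

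One small slip: the cross term in your expansion is $-2m^2\cosh(2a)\cos(2\delta)$, so the regrouped term should be $2m^2\cosh(2a)(1-\cos 2\delta)$ rather than $2m\cosh(2a)(1-\cos 2\delta)$. The coefficient is still positive, so nothing in the argument changes.
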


\begin{remark}
Theorem~\ref{thm:phase-minimizer} is deliberately restricted to the
principal $H^1$ phase sector (the zero-winding sector).  It does not assert that the full infimum
in Proposition~\ref{prop:phase-reduction}, which also allows other phase
lifts, is attained.  The explicit phases used below are nevertheless admissible
for the unrestricted infimum and therefore give genuine upper bounds on
$\mathfrak E_m(\HH_a;\R)$.
\end{remark}

\begin{proof}
Write $s=\sinh(2a)$ and $c=\cosh(2a)$.  Subtracting the value at $\delta=0$
from one half of the functional gives the exact identity
\begin{equation}\label{eq:phase-difference}
 \frac12\bigl(\mathcal E_a[\delta]-\mathcal E_a[0]\bigr)
 =\int_\R\left(\delta'^2+2ms\delta'
       +4m^2c\sin^2\delta+2ma'\sin(2\delta)\right)dt.
\end{equation}
Here $s,a'\in L^2$, while $c$ is bounded and $c\ge1$.  On the interval
$[-\pi/2,\pi/2]$ one has
$\sin^2 x\ge4x^2/\pi^2$ and $|\sin(2x)|\le2|x|$.
Young's inequality applied to the two mixed terms in
\eqref{eq:phase-difference} therefore yields constants $c_m>0$ and $C_a<\infty$
such that
\begin{equation}\label{eq:phase-coercive}
 \mathcal E_a[\delta]-\mathcal E_a[0]
 \ge c_m\|\delta\|_{H^1}^2-C_a,
 \qquad \delta\in\mathcal K.
\end{equation}
Thus every minimizing sequence is bounded in $H^1$.

After passing to a subsequence, $\delta_k\rightharpoonup\delta_*$ in $H^1$,
$\delta_k\to\delta_*$ in $L^2_{\rm loc}$, and pointwise a.e.  The set
$\mathcal K$ is weakly closed.  The derivative term in
\eqref{eq:phase-difference} is weakly lower semicontinuous, the term with
$s\delta'$ is weakly continuous, and the non-negative term with
$c\sin^2\delta$ is lower semicontinuous by Fatou's lemma.  Finally,
\[
 \int a'\sin(2\delta_k)\,dt\longrightarrow
 \int a'\sin(2\delta_*)\,dt.
\]
To see the last assertion, use local $L^2$ convergence and then make the tails
small with $a'\in L^2$ and the uniform $H^1$ bound.  Hence the direct method
of the calculus of variations gives a minimizer.

Under \eqref{eq:strict-phase-condition}, the first variation at zero is the
nonzero functional
\[
 8m\int_\R a'(1-c)\eta\,dt,
 \qquad \eta\in C_c^\infty(\R).
\]
A sufficiently small perturbation of the appropriate sign remains in the
interior of $\mathcal K$ and lowers the energy.  Thus zero is not a minimizer
and the minimum is strictly below the positive-lift value.  An interior
minimizer admits arbitrary compactly supported variations, so its first
variation vanishes and gives \eqref{eq:phase-EL}.
\end{proof}

\begin{proposition}[A quantitatively improving phase]
\label{prop:quantitative-phase}
Assume in addition that $a\in H^2(\R)$ and set
\[
 c=\cosh(2a),\qquad \eta=a'(c-1),\qquad
 A_a=\|\eta\|_2^2.
\]
If $A_a>0$, define
\begin{align}
 M_a={}&2\|\eta'\|_2^2
 +8m^2\|c\|_\infty\|\eta\|_2^2
 +8m\|a'\|_2\|\eta\|_4^2,\label{eq:Ma}\\
 \varepsilon_a={}&\min\left\{
      \frac{2mA_a}{M_a},
      \frac{\pi}{4\|\eta\|_\infty}
      \right\}.
 \label{eq:epsa}
\end{align}
Then $\delta_a=\varepsilon_a\eta$ satisfies
$\|\delta_a\|_\infty\le\pi/4$ and
\begin{equation}\label{eq:quantitative-improvement}
 \mathcal E_a[\delta_a]
 \le \mathcal E_a[0]-6mA_a\varepsilon_a.
\end{equation}
In particular, the strict improvement is expressed entirely in terms of the
prescribed Hamiltonian profile $a$.
\end{proposition}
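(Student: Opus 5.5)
The plan is to evaluate the phase-difference identity \eqref{eq:phase-difference} from the proof of Theorem~\ref{thm:phase-minimizer} at $\delta=\varepsilon\eta$. Its first-order terms combine exactly into $-4m\varepsilon A_a$, and the remaining terms are bounded by $\tfrac12\varepsilon^2M_a$. Choosing $\varepsilon=\varepsilon_a$ then yields \eqref{eq:quantitative-improvement}.

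\emph{Regularity.} Write $s=\sinh(2a)$, as in the proof of Theorem~\ref{thm:phase-minimizer}. Since $a\in H^2$, we have $a'\in H^1\subset L^\infty$, and $c-1$, $s$ are bounded with $s'=2a'c\in L^2$. Hence $\eta=a'(c-1)\in L^2\cap L^\infty$. Its derivative $\eta'=a''(c-1)+2(a')^2s$ lies in $L^2$, so $\eta\in H^1$ and $\eta\in L^4$. Consequently $\delta_a=\varepsilon_a\eta\in H^1$. The choice $\varepsilon_a\le\pi/(4\|\eta\|_\infty)$ gives $\|\delta_a\|_\infty\le\pi/4$, so $\delta_a\in\mathcal K$ and $\mathcal E_a[\delta_a]$ is finite.

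\emph{Expansion.} In \eqref{eq:phase-difference}, integrate the mixed term by parts. This is legitimate since $s\in H^1$ and $\delta\in H^1$ on $\R$, and it gives $2m\int s\delta'=-4m\int a'c\,\delta$. Write $\sin(2\delta)=2\delta+(\sin 2\delta-2\delta)$. The linear contributions then combine to
\[
 4m\int a'(1-c)\delta\,dt=-4m\varepsilon\int\eta^2\,dt=-4m\varepsilon A_a .
\]
The remainder consists of three pieces.
\begin{itemize}
\item The derivative term equals $\varepsilon^2\|\eta'\|_2^2$.
\item Using $\sin^2\delta\le\delta^2$, the potential term satisfies $4m^2\int c\sin^2\delta\le4m^2\|c\|_\infty\varepsilon^2\|\eta\|_2^2$.
\item The nonlinear correction is $2m\int a'(\sin2\delta-2\delta)$. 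For $|x|\le\pi/2$ one has $|\sin x-x|\le x^3/6\le x^2/2$, so $|\sin2\delta-2\delta|\le2\delta^2$. Cauchy--Schwarz then bounds this term by $4m\|a'\|_2\varepsilon^2\|\eta\|_4^2$.
\end{itemize}
Altogether,
\[
 \tfrac12\bigl(\mathcal E_a[\delta_a]-\mathcal E_a[0]\bigr)\le\tfrac12\varepsilon_a^2M_a-4m\varepsilon_aA_a ,
\]
with $M_a$ exactly as in \eqref{eq:Ma}.

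\emph{Conclusion.} Multiplying by $2$ gives $\mathcal E_a[\delta_a]-\mathcal E_a[0]\le\varepsilon_a(\varepsilon_aM_a-8mA_a)$. Since $\varepsilon_a\le2mA_a/M_a$, we get $\varepsilon_aM_a\le2mA_a$, and the bracket is at most $-6mA_a$. This proves \eqref{eq:quantitative-improvement}.

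The main point needing care is the cancellation in the linear terms. It requires the integration by parts on all of $\R$ with vanishing boundary terms, which holds because $s\delta\in W^{1,1}$. It also relies on the identity $s'=2ca'$, which converts the $s\delta'$ term into a multiple of $a'c\,\delta$. The Taylor bound for $\sin$ on the range $|2\delta|\le\pi/2$ is elementary.
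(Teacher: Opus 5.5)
Your proof is correct and is essentially the paper's argument. Both evaluate the identity \eqref{eq:phase-difference} along $\delta=\varepsilon\eta$, extract the first-order term $-4mA_a\varepsilon$ by integrating $2m\int s\delta'$ by parts with $s'=2a'c$, bound the second-order remainder by $\tfrac12M_a\varepsilon^2$, and then insert $\varepsilon_a\le 2mA_a/M_a$. The only difference is cosmetic: the paper bounds $|F''(\varepsilon)|\le M_a$ for $F(\varepsilon)=\tfrac12(\mathcal E_a[\varepsilon\eta]-\mathcal E_a[0])$ and applies Taylor's formula in $\varepsilon$, whereas you apply $\sin^2x\le x^2$ and $|\sin x-x|\le x^2/2$ pointwise in the integrand. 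Your route gives the same constants without differentiating under the integral sign, and the second inequality holds for all real $x$, so you do not need the $\pi/4$ sector restriction for it.
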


\begin{proof}
The assumption $a\in H^2(\R)$ implies
$\eta\in H^1(\R)\cap L^\infty(\R)$, so all quantities above are finite.  Put
\[
 F(\varepsilon)=\frac12\bigl(
       \mathcal E_a[\varepsilon\eta]-\mathcal E_a[0]\bigr).
\]
Using \eqref{eq:phase-difference} and integrating the term
$2m\int\sinh(2a)\eta'$ by parts gives
\begin{equation}\label{eq:Fprime-zero}
 F'(0)=-4m\int_\R\bigl[a'(c-1)\bigr]^2dt=-4mA_a.
\end{equation}
A second differentiation yields
\[
 F''(\varepsilon)
 =2\|\eta'\|_2^2
  +8m^2\int_\R c\cos(2\varepsilon\eta)\eta^2dt
  -8m\int_\R a'\sin(2\varepsilon\eta)\eta^2dt,
\]
whence $|F''(\varepsilon)|\le M_a$.  Taylor's formula and
$\varepsilon_a\le2mA_a/M_a$ imply
\[
 F(\varepsilon_a)
 \le-4mA_a\varepsilon_a+\frac12M_a\varepsilon_a^2
 \le-3mA_a\varepsilon_a.
\]
The second restriction in \eqref{eq:epsa} gives the asserted sector bound.
Multiplying the last estimate by two proves
\eqref{eq:quantitative-improvement}.
\end{proof}

\begin{remark}
For $a(t)=A\operatorname{sech}(t/L)$, the quantities in
\eqref{eq:Ma}--\eqref{eq:epsa} are finite one-dimensional integrals depending
only on $A$, $L$, and $m$.  Proposition~\ref{prop:quantitative-phase} therefore
turns the qualitative strict improvement into a directly computable bound for
the concrete model in Section~6.
\end{remark}

\begin{remark}
Equation~\eqref{eq:phase-EL} is a forced sine--Gordon-type equation.  The
remaining scalar questions are whether a principal-sector minimizer is always
interior, whether it is unique, and how it decays.  These questions provide a
controlled test problem before attacking the full matrix gauge fibre.
\end{remark}

\section{Concluding remarks and open problems}
The preceding results isolate two mechanisms.  The spectral mechanism is the
pure-$L^2$ matrix Dirac estimate in Proposition~\ref{prop:matrix-dirac}, which
combines two-sided gap decoupling with the truncated pseudo-relativistic bound
of Lemma~\ref{lem:truncated-pr}.  The geometric mechanism is the optimization
of the resulting matrix coefficient over all $J$-unitary representatives of a
fixed Hamiltonian.  The latter is explicit at zero mass and becomes a genuine
variational problem for $m>0$.

Several questions remain open.  First, it would be useful to determine whether
the principal-sector minimizer in Theorem~\ref{thm:phase-minimizer} is always
interior and whether the unrestricted phase problem admits a minimizer.  When
the principal-sector minimizer is interior, it remains to determine whether the
weak solution of \eqref{eq:phase-EL} is unique and has quantitative decay.
Second, the scalar phase equation should be replaced by the matrix Euler--Lagrange system on the
full $U(n)\times U(n)$ gauge fibre.  Third, the universal endpoint term in
\eqref{eq:main-half} may be improvable if the absolute moment is replaced by a
relative moment with respect to the free realization having the same boundary
condition.  Finally, sharp constants and possible trace-formula refinements of
\eqref{eq:main-full} remain to be investigated.

\section*{Conflict of interest}
The authors declare that they have no competing interests related to the publication of this paper.

\section*{Authors’ contributions}
All authors contributed equally to the manuscript and approved the final version.

\medskip
\noindent\textbf{Funding.}
This work was co-funded by the Czech Science Foundation (GA\v{C}R), Grant
No.~25-16847S, and by the University of Ostrava, Grant No.~SGS05/P\v{R}F/2026.

\end{document}